\DeclareMathOperator{\csch}{csch}
\newcommand{\upperRomannumeral}[1]{\uppercase\expandafter{\romannumeral#1}}
\newcommand{\lowerRomannumeral}[1]{\lowercase\expandafter{\romannumeral#1}}
\theoremstyle{plain}
  \newtheorem{proposition}[]{Proposition}
  \newtheorem{lemma}[]{Lemma}
  \newtheorem{theorem}[]{Theorem}
  \newtheorem{remark}[]{Remark}
  \newtheorem*{remark*}{Remark}
\title{A CLT for the total energy of the two-dimensional critical Ising model}
\author{Jianping Jiang}
\address{NYU-ECNU Institute of Mathematical Sciences at NYU Shanghai, 3663 Zhongshan
Road North, Shanghai 200062, China.}
\email{jjiang@nyu.edu}
\begin{document}
\begin{abstract}
Consider the Ising model on $([1,2N]\times[1,2M])\cap\mathbb{Z}^2$ at critical temperature with periodic boundary condition in the horizontal direction and free boundary condition in the vertical direction. Let $E_{M,N}$ be its total energy (or Hamiltonian). Suppose $M$ is a function of $N$ satisfying $M\geq N/(\ln N)^{\alpha}$ for some $\alpha\in[0,1)$. In particular, one may take $M=N$. We prove that
\begin{equation*}
\frac{E_{M,N}+4\sqrt{2}M N-(4/\pi)N\ln N}{\sqrt{(32/\pi)MN\ln N}}
\end{equation*}
converges weakly to a standard Gaussian distribution as $N\rightarrow\infty$.
\end{abstract}
\maketitle
\section{Introduction}
Recall that the classical Ising model at inverse temperature $\beta$ on $\Lambda\subset\mathbb{Z}^d$ with free boundary condition is defined by the probability measure $\mathbb{P}_{\Lambda}$ on $\{-1,+1\}^{\Lambda}$ such that for each $\sigma\in\{-1,+1\}^{\Lambda}$,
\begin{equation}
\mathbb{P}_{\Lambda}(\sigma):=\frac{1}{Z_{\Lambda}(\beta)}e^{\beta\sum_{\{i,j\}}\sigma_i\sigma_j},
\end{equation}
where the sum is over all nearest neighbor pairs in $\Lambda$, and $Z_{\Lambda}(\beta)$ is the partition function (which is the normalization constant needed to make this a probability measure). The total magnetization and total energy (or Hamiltonian) are
\begin{align}
&M_{\Lambda}:=\sum_{i\in\Lambda} \sigma_i,\\
&E_{\Lambda}:=-\sum_{\{i,j\}}\sigma_i\sigma_j.
\end{align}
It was proved in \cite{New83} that under the full-plane Ising measure $\mathbb{P}$ (with the corresponding expectation $\mathbb{E}$), $((M_{\Lambda}-\mathbb{E} M_{\Lambda})/\text{Var}{M_{\Lambda}}, (E_{\Lambda}-\mathbb{E} E_{\Lambda})/\text{Var}{E_{\Lambda}})$ converges weakly to a standard bivariate Gaussian distribution (i.e., the two components are independent and each is a mean $0$ variance $1$ Gaussian random variable) if the susceptibility
\begin{equation}
\chi:=\sum_{j\in\mathbb{Z}^d}\text{Cov}(\sigma_0,\sigma_j)<\infty.
\end{equation}
In particular, this implies that such a convergence holds when $d=2$ and $\beta\neq\beta_c$ where $\beta_c$ is the critical inverse temperature. A similar Gaussian limit was obtained for the total magnetization and total energy on one side of a rectangle when $d=2$ and $\beta=\beta_c$ in \cite{DeC87} (see also \cite{Abr78} for the total magnetization only), and for $(M_{\Lambda}, E_{\Lambda})$ when $d>4$ and any $\beta\in[0,\beta_c]$ in \cite{DN90}. When $d=2$ and $\beta=\beta_c$, it was proved in \cite{CGN15} that $(M_{\Lambda}-\mathbb{E} M_{\Lambda})/\text{Var}{M_{\Lambda}}$ converges weakly to a non-Gaussian limit; and for the Ising model on a $(2N)\times(2M)$ rectangle with periodic boundary condition in the horizontal direction and free boundary condition in the vertical direction (with the total energy denoted by $E_{M,N}$), it was proved in \cite{DeC84} that $(E_{M,N}-\mathbb{E} E_{M,N})/\sqrt{2M2N\ln N}$ converges to a Gaussian distribution by  first taking $M\rightarrow\infty$ and then $N\rightarrow\infty$. One disadvantage of this iterated limit is that one does not see the effect from the boundary condition. In this paper, for the same Ising model as considered in \cite{DeC84}, we prove a central limit theorem (CLT) for $E_{M,N}$ when both $M$ and $N\rightarrow \infty$ simultaneously. More precisely, we consider the Ising model on $\Lambda_{M,N}:=([1,2N]\times[1,2M])\cap\mathbb{Z}^2$ with the total energy
\begin{equation}\label{eqHal}
E_{M,N}:=-\sum_{j=1}^{2M}\sum_{k=1}^{2N}\sigma_{j,k}\sigma_{j,k+1}-\sum_{j=1}^{2M-1}\sum_{k=1}^{2N}\sigma_{j,k}\sigma_{j+1,k},
\end{equation}
where $k=2N+1$ is identified with $k=1$. For each $\sigma\in\{-1,+1\}^{\Lambda_{M,N}}$, we have
\begin{equation}
\mathbb{P}_{M,N}^{\beta}(\sigma):=\frac{1}{Z_{M,N}(\beta)}e^{-\beta E_{M,N}},
\end{equation}
where
\begin{equation}\label{eqpartf}
Z_{M,N}(\beta):=\sum_{\sigma\in\{-1,+1\}^{\Lambda_{M,N}}}e^{-\beta E_{M,N}}
\end{equation}
is the partition function.

Our main result is
\begin{theorem}\label{thm}
Consider the Ising model on $\Lambda_{M,N}$ at critical temperature with periodic boundary condition in the horizontal direction and free boundary condition in the vertical direction (i.e., with the Hamiltonian given by \eqref{eqHal}). Suppose that $M\in(0,\infty)$ is a function of $N$ satisfying
\begin{equation}\label{eqM}
\lim_{N\rightarrow\infty}\frac{N(\ln\ln N)^2}{M\ln N}=0.
\end{equation}
Let $\hat{E}_{M,N}$ be the normalized random variable
\begin{equation}\label{eqEhat}
\hat{E}_{M,N}:=\frac{E_{M,N}+4\sqrt{2}M N-(4/\pi)N\ln N}{\sqrt{4M N\ln N}}.
\end{equation}
Then for each $t\geq 0$,
\begin{equation}\label{eqmgf}
\lim_{N\rightarrow\infty} \langle e^{t\hat{E}_{M,N}}\rangle_{M,N}^{\beta_c}=e^{4t^2/\pi},
\end{equation}
where $\langle\cdot\rangle_{M,N}^{\beta_c}$ denotes the expectation with respect to $\mathbb{P}_{M,N}^{\beta_c}$. In particular, this implies that $\hat{E}_{M, N}$ converges weakly to a Gaussian distribution with mean $0$ and variance $8/\pi$ as $N\rightarrow\infty$.
\end{theorem}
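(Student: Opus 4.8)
The plan is to use the fact that tilting the critical Gibbs measure by an exponential of the energy is the same as shifting the inverse temperature, so that the moment generating function becomes a ratio of partition functions. Writing $a:=\sqrt{4MN\ln N}$ and $b:=4\sqrt{2}MN-(4/\pi)N\ln N$ for the normalization and centering appearing in \eqref{eqEhat}, the definitions of $\mathbb{P}_{M,N}^{\beta_c}$ and of $Z_{M,N}$ in \eqref{eqpartf} give, for every $t\geq 0$,
\begin{equation*}
\langle e^{t\hat{E}_{M,N}}\rangle_{M,N}^{\beta_c}=e^{tb/a}\,\frac{Z_{M,N}(\beta_c-t/a)}{Z_{M,N}(\beta_c)},
\end{equation*}
since $e^{(t/a)E_{M,N}}e^{-\beta_c E_{M,N}}=e^{-(\beta_c-t/a)E_{M,N}}$. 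Thus \eqref{eqmgf} is equivalent to the statement that
\begin{equation*}
\frac{tb}{a}+\ln Z_{M,N}(\beta_c-t/a)-\ln Z_{M,N}(\beta_c)\longrightarrow \frac{4t^2}{\pi}
\end{equation*}
as $N\to\infty$ with $t$ fixed and $t/a\to 0$, and the entire task is to pin down the asymptotics of this log-partition-function increment up to $o(1)$.

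For this I would invoke the exact diagonalization of the row-to-row transfer matrix for the present cylinder geometry (periodic horizontal direction of width $2N$, free vertical direction with $2M$ rows), in the tradition of Onsager, Kaufman and McCoy--Wu. After the fermionic reduction, $\ln Z_{M,N}(\beta)$ splits into a bulk term proportional to $MN\ln(2\sinh 2\beta)$, a boundary term reflecting the two free edges, and a sum over the allowed Fourier modes $\theta$ in the periodic direction of explicit functions of the single-mode energies $\gamma_\theta(\beta)\geq 0$, defined by $\cosh\gamma_\theta=\cosh 2\beta\,\coth 2\beta-\cos\theta$. Two features of this dispersion relation govern everything at $\beta=\beta_c$ (where $\sinh 2\beta_c=1$): one has $\gamma_\theta\sim|\theta|$ as $\theta\to 0$, so the system is gapless; and the height $2M$ enters only through factors of the form $1+e^{-2M\gamma_\theta}$, which regularize the smallest modes.

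I would then expand the increment mode by mode to second order in the shift $t/a$. The first-order part equals $(t/a)\langle E_{M,N}\rangle_{M,N}^{\beta_c}$, and the crux is the asymptotics $\langle E_{M,N}\rangle_{M,N}^{\beta_c}=-4\sqrt{2}MN+(4/\pi)N\ln N+o(a)$. The extensive term $-4\sqrt{2}MN$ is the bulk energy $-\sqrt{2}$ per site; remarkably it stays finite at $\beta_c$, because the coefficient $2\tanh^2 2\beta_c-1$ that multiplies the potentially singular part vanishes, so no logarithm is produced at the level of the area term. The term $(4/\pi)N\ln N$ is instead a surface contribution of the two free edges, each of horizontal length $2N$: its excess energy is logarithmically enhanced at criticality and, being a surface effect, carries no factor of $M$; this is exactly the term removed by the centering, so that $tb/a+(t/a)\langle E_{M,N}\rangle_{M,N}^{\beta_c}\to 0$. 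The second-order part equals $\tfrac12(t/a)^2\operatorname{Var}_{M,N}^{\beta_c}(E_{M,N})$, and here the near-critical modes $\theta\approx 0$ generate, through $\partial_\beta^2\gamma_\theta\asymp 1/|\theta|$ and the resulting pile-up $\sum_\theta 1/|\theta|\asymp N\ln N$, the divergent specific heat, giving $\operatorname{Var}_{M,N}^{\beta_c}(E_{M,N})\sim (32/\pi)MN\ln N$; hence this term tends to $\tfrac12 t^2(32/\pi)/4=4t^2/\pi$.

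The main obstacle is the uniform control of the small modes $|\theta|\lesssim 1/M$ together with the remainder of the expansion, and this is precisely where hypothesis \eqref{eqM} enters. When $M/N\to 0$ the factors $1+e^{-2M\gamma_\theta}$ are of order one for all modes with $|\theta|\lesssim 1/M$ (equivalently, the sub-leading transfer-matrix eigenvalues are not exponentially suppressed), so their combined contribution to both the energy and the variance must be estimated delicately rather than discarded; a careful accounting of this crossover shows the error is of relative size $\asymp N(\ln\ln N)^2/(M\ln N)$, which \eqref{eqM} forces to $0$, the double logarithm being an artifact of the sharpest available bound. The same condition disposes of the third- and higher-order terms of the $t/a$ expansion, whose coefficients are higher cumulants of $E_{M,N}$ growing more slowly than the corresponding power of $a$. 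Assembling these estimates yields \eqref{eqmgf} for every $t\geq 0$. Finally, since $e^{4t^2/\pi}$ is finite for all $t\geq 0$ and is the moment generating function of the $N(0,8/\pi)$ law, and since a probability measure whose moment generating function is finite on $[0,\infty)$ is determined by it there, the convergence \eqref{eqmgf} together with the tightness furnished by the convergence $\operatorname{Var}(\hat{E}_{M,N})\to 8/\pi$ upgrades to weak convergence of $\hat{E}_{M,N}$ to $N(0,8/\pi)$.
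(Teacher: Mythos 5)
Your proposal follows essentially the same route as the paper: you write $\langle e^{t\hat{E}_{M,N}}\rangle_{M,N}^{\beta_c}$ as $e^{tb/a}\,Z_{M,N}(\beta_c-t/a)/Z_{M,N}(\beta_c)$ (the paper's Lemma \ref{lemmgf}), invoke the McCoy--Wu exact cylinder partition function (Lemma \ref{lempartf}), and Taylor-expand $\ln Z_{M,N}$ at $\beta_c$ so that the first-order term --- the mean energy, bulk $-4\sqrt{2}MN$ plus free-edge surface term $(4/\pi)N\ln N$ --- cancels the centering, the second-order term --- the variance $\sim(32/\pi)MN\ln N$, driven by $\partial_\beta^2\gamma_\theta\asymp 1/\theta$ and $\sum_\theta 1/\theta\asymp N\ln N$ --- produces $4t^2/\pi$, and hypothesis \eqref{eqM} kills the small-$\theta$/finite-$M$ corrections and higher-order remainders, exactly matching the paper's decomposition into $L_1,\dots,L_4$ and Proposition \ref{prop}. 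The caveat is that your write-up asserts rather than proves the uniform estimates that constitute the bulk of the paper's work (the analogues of Lemmas \ref{lemest}--\ref{lemsumcsch}, \ref{lemsumgamma}--\ref{lemL4second}, i.e., control of the Lagrange remainders at intermediate $\tilde{\beta}$ and of the $e^{-4M\gamma_\theta}$ sums), and your closing justification of weak convergence via ``tightness from $\operatorname{Var}(\hat{E}_{M,N})\to 8/\pi$'' is not quite right --- the correct standard step, which the paper delegates to \cite{Bil95}, is that pointwise convergence of moment generating functions on $t\geq 0$ to the everywhere-finite Gaussian one already implies convergence in distribution --- but the plan itself is the paper's proof.
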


\begin{remark}
We believe a similar CLT holds for the critical Ising model with other boundary conditions (e.g., free, all $+$, all $-$). Furthermore, for the critical Ising model on the rescaled lattice $a\mathbb{Z}^2$,  we expect that the renormalized energy field
\begin{equation}
a(-\ln a)^{-1/2}\sum_{\{x,y\}}[\sigma_x\sigma_y-\sqrt{2}/2]\delta_{(x+y)/2}\Longrightarrow \text{Gaussian white noise as } a\downarrow 0,
\end{equation}
where the sum is over all nearest neighbor pairs in $a\mathbb{Z}^2$ and $\delta_{(x+y)/2}$ is a unit Dirac point measure at $(x+y)/2$.
\end{remark}

\begin{remark}
Let $\Lambda_a:=a\mathbb{Z}^2\cap \Lambda$ be the $a$-approximation of $\Lambda$. For any $z\in V$, let $x_a(z)y_a(z)$ be the edge which is closest to $z$. It was proved in \cite{HS13} that under free or all $+$ boundary condition,
\begin{equation}\label{eqHS}
a^{-1}[\langle\sigma_{x_a(z)}\sigma_{y_a(z)}\rangle_{\Lambda_a}^{\beta_c}-\sqrt{2}/2]
\end{equation}
has a conformally covariant limit as $a\downarrow 0$. See also \cite{Hon10} for a generalization of this result to $n$-point energy correlation functions. Even though the results of \cite{HS13,Hon10} do not apply directly to the boundary condition considered in Theorem~\ref{thm}, they suggest the $N\ln N$ behavior (resulted from the free boundary condition) in the expectation of $E_{M,N}$ since the limit of \eqref{eqHS} has an order of $[\text{dist}(z,\partial\Lambda)]^{-1}$ where $\text{dist}(z,\partial\Lambda)$ denotes the Euclidean distance between $z$ and the boundary of $D$.
\end{remark}

\begin{remark}
For the full-plane critical Ising model, Hecht \cite{Hec67} showed that the truncated two-point energy correlation function has the following behavior
\begin{equation}\label{eq2cor}
\langle \epsilon_{z_1}\epsilon_{z_2}\rangle \approx \frac{C}{|z_1-z_2|^2},
\end{equation}
where $\epsilon_{z_i}:=\langle \sigma_{x(z_i)}\sigma_{y(z_i)}-\sqrt{2}/2 \rangle$ with $\{x(z_i),y(z_i)\}$ the closest edge to $z_i$. In \cite{DFSZ87}, it was shown that (see (2.3) and (2.13) there)
\begin{equation}\label{eqncor}
\langle \epsilon_{z_1}\epsilon_{z_2}\dots\epsilon_{z_n}\rangle=\left|\frac{1}{n!2^n}\sum_{\tau\in S^{2n}}\prod_{j=1}^n A_{\tau(2j-1) \tau(2j)}\right|,
\end{equation}
where $S^{2n}$ denotes the set of $(2n)!$ permutations of $\{1,2,\dots,2n\}$ and $A_{ij}:=\langle \epsilon_{z_i}\epsilon_{z_j}\rangle$. Equation \eqref{eqncor} without the modulus is Isserlis's formula (or Wick's formula) for the multivariate Gaussian distribution. This is one of the motivations of the current paper: the critical scaling limit of the magnetization field was established in \cite{CGN15} and it is natural to ask if an analogous result holds for the energy field. Theorem \ref{thm} suggests that a scaling limit of the energy field (with correlations behaving like \eqref{eq2cor} and \eqref{eqncor}) may not exist in the usual probabilistic sense (i.e., pairing the limiting field against some nice test functions to get random variables).
\end{remark}

We prove Theorem \ref{thm} in the next section, our method is similar to that of \cite{Abr78,DeC84,DeC87}. Namely, we first write the moment generating function of $E_{M,N}$ as a ratio of two partition functions (at different temperatures), and then use the explicit formula for the partition function to derive the asymptotic behavior of this moment generating function.

\section{Proof of the main theorem}
The following lemma about the partition function from \cite{MW73} is essential to the proof of Theorem \ref{thm}.
\begin{lemma}\label{lempartf}
The partition function defined in \eqref{eqpartf} is
\begin{equation}\label{eqpartfe}
Z_{M,N}(\beta)=(2\sinh(2\beta))^{2MN}(\cosh(\beta))^{-2N}\prod_{\theta}\left[\frac{e^{2M\gamma_{\theta}}+e^{-2M\gamma_{\theta}}}{2}+
\frac{e^{2M\gamma_{\theta}}-e^{-2M\gamma_{\theta}}}{2}g_{\theta}\right],
\end{equation}
where the product is over $\theta=\pi(2n-1)/(2N)$ with $n=1,2,\dots,N$, and
\begin{equation}\label{eqgamma}
\cosh(\gamma_{\theta})=\coth(2\beta)\cosh(2\beta)-\cos(\theta) \text{ with }\gamma_{\theta}\geq 0,
\end{equation}
\begin{equation}\label{eqg}
g_{\theta}=\frac{\coth(2\beta)-\cosh(2\beta)\cos(\theta)}{\sinh(\gamma_{\theta})}.
\end{equation}
\end{lemma}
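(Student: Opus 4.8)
The plan is to establish the formula by the transfer-matrix method of Onsager and Kaufman, adapted to the present geometry in which the ring direction (the $k$-variable, of length $2N$) is periodic while the transfer direction (the $j$-variable, of length $2M$) is free. First I would record the partition function as a matrix element of a transfer operator on the $2^{2N}$-dimensional state space of one row. Writing $V_1$ for the diagonal operator implementing the intra-row (horizontal, periodic) bonds, $\langle\sigma|V_1|\sigma'\rangle=\delta_{\sigma\sigma'}\exp(\beta\sum_{k=1}^{2N}\sigma_k\sigma_{k+1})$, and $V_2$ for the operator implementing the inter-row (vertical) bonds, $\langle\sigma|V_2|\sigma'\rangle=\exp(\beta\sum_{k=1}^{2N}\sigma_k\sigma'_k)$, a direct expansion over the intermediate row configurations gives
\begin{equation*}
Z_{M,N}(\beta)=\langle u|V_1(V_2V_1)^{2M-1}|u\rangle,
\end{equation*}
where $|u\rangle=\sum_{\sigma}|\sigma\rangle$ is the uniform boundary state encoding the free boundary at the top and bottom rows. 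It is the replacement of the trace (which would correspond to a periodic transfer direction) by this boundary matrix element that will ultimately produce the factor $g_\theta$.

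The second step is to diagonalize $V_2V_1$. Following Kaufman, I would pass to the Clifford algebra generated by the Jordan--Wigner fermions on the ring and observe that $V_1$ and $V_2$ are both exponentials of quadratic forms in these fermions; hence $V_2V_1$ acts as a linear canonical (Bogoliubov) transformation. A Fourier transform over the ring of $2N$ sites decouples the problem into commuting $2\times 2$ blocks indexed by the momenta $\theta$, and the free (uniform) boundary selects precisely the antiperiodic sector, giving the momenta $\theta=\pi(2n-1)/(2N)$, $n=1,\dots,N$. Diagonalizing each block yields single-particle energies $\pm\gamma_\theta$ with $\gamma_\theta\ge 0$ determined by the Onsager dispersion relation \eqref{eqgamma}, so that the eigenvalues of the transfer operator on the $\theta$-mode are $e^{\pm\gamma_\theta}$.

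With the spectrum in hand, the third step is to evaluate the boundary matrix element mode by mode. For a single mode the trace of $(V_2V_1)^{2M-1}$ (together with the boundary $V_1$ factor) would give $e^{2M\gamma_\theta}+e^{-2M\gamma_\theta}$; the free boundary state $|u\rangle$, however, has a nontrivial overlap with the rotated eigenbasis, and expanding it in that basis replaces this sum by $\cosh(2M\gamma_\theta)+g_\theta\sinh(2M\gamma_\theta)$, with $g_\theta$ determined by the Bogoliubov rotation angle on that mode. Computing this angle from the explicit form of $V_1$ and $V_2$ gives $g_\theta=(\coth 2\beta-\cosh 2\beta\cos\theta)/\sinh\gamma_\theta$, i.e. \eqref{eqg}. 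Multiplying the $N$ mode contributions and collecting the scalar prefactors --- the bulk factor $(2\sinh 2\beta)^{2MN}$ from the fermionic normalization of $V_1,V_2$ (via the duality $\sinh 2\beta\sinh 2\beta^{*}=1$), and the boundary correction $(\cosh\beta)^{-2N}$ coming from the asymmetry between the $2M$ horizontal and the $2M-1$ vertical operator factors --- yields \eqref{eqpartfe}.

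I expect the main obstacle to be the careful treatment of the free boundary: one must compute the overlap of the uniform state $|u\rangle$ with the Bogoliubov-rotated eigenvectors, verify that it is exactly the antiperiodic sector $\theta=\pi(2n-1)/(2N)$ that survives, and extract the precise rotation angle that produces $g_\theta$. Getting the $\cosh+g_\theta\sinh$ combination (as opposed to the pure $\cosh$ arising for periodic vertical boundaries) and pinning down the boundary prefactor $(\cosh\beta)^{-2N}$ exactly are the delicate points; the bulk diagonalization and the dispersion relation are standard. An alternative route that sidesteps the operator boundary analysis is the combinatorial dimer/Pfaffian method (Kac--Ward, Kasteleyn--Fisher), in which $Z_{M,N}$ is a Pfaffian evaluated by Fourier transform in the periodic direction; the same momenta and the same boundary factor $g_\theta$ would emerge from the boundary rows of the Kac--Ward matrix, which may be the cleaner bookkeeping of the two.
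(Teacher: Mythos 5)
The paper itself does not prove this lemma: its ``proof'' is a one-line citation to Sections 2 and 3 of Chapter VI of \cite{MW73}, where \eqref{eqpartfe} is obtained by the dimer/Pfaffian method that McCoy and Wu use throughout their book (Fourier transform of the Kasteleyn-type matrix in the periodic direction, with the free boundary rows handled separately) --- that is, by the route you offer only as an alternative in your closing paragraph. Your primary route, the Kaufman-style transfer-matrix/Clifford-algebra derivation, is therefore genuinely different from the derivation the paper points to, but it is the other classical way to obtain exactly this formula, and your outline is sound. In particular: the identity $Z_{M,N}(\beta)=\langle u|V_1(V_2V_1)^{2M-1}|u\rangle$ is correct; the boundary state $|u\rangle$ is fixed by the parity operator $\prod_k\sigma^x_k$ while $V_2V_1$ preserves parity, which is exactly why only the antiperiodic momenta $\theta=\pi(2n-1)/(2N)$ contribute; and your attribution of the prefactor $(\cosh\beta)^{-2N}$ to the mismatch between $2M$ horizontal and $2M-1$ vertical factors checks out quantitatively: writing $V_2=(2\sinh 2\beta)^{N}e^{\beta^{*}\sum_k\sigma^x_k}$ with $\tanh\beta^{*}=e^{-2\beta}$, the state $|u\rangle$ is an eigenvector of $V_2$, and absorbing one extra factor of $V_2$ into it gives $\langle u|V_1(V_2V_1)^{2M-1}|u\rangle=2^{-2N}(\cosh\beta)^{-2N}\langle u|(V_1V_2)^{2M}\,|u\rangle$, which symmetrizes the operator count to $2M$ (whence the exponents $2M\gamma_\theta$ rather than $(2M-1)\gamma_\theta$) and releases precisely the scalar prefactors in \eqref{eqpartfe}. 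What remains unexecuted in your sketch are the two computations you yourself flag as delicate --- the overlap of $|u\rangle$ with the Bogoliubov eigenbasis of each $\theta$-mode and the verification that the rotation angle produces exactly \eqref{eqg} --- so as written this is a credible plan rather than a complete proof. The trade-off between the two routes: your operator approach makes $g_\theta$ conceptually transparent (it is a boundary-overlap/Bogoliubov-angle effect, and the dispersion relation \eqref{eqgamma} arises from diagonalizing a $2\times 2$ block) at the price of careful boundary-state algebra, whereas the Pfaffian computation in \cite{MW73} trades that for determinant bookkeeping and is the derivation that the paper's citation actually covers; if you want your write-up to align with the reference the paper leans on, that is the version to execute.
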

\begin{proof}
See Sections 2 and 3 of Chapter \upperRomannumeral{6} in \cite{MW73}.
\end{proof}

\begin{remark}
The partition function \eqref{eqpartfe} differs from (7) in \cite{DeC84} by a factor of $2^{2N}$. By checking the particular case $\beta=0$, one can see that \eqref{eqpartfe} is the correct one. But such a difference does not affect the computation of $\langle e^{t E_{M,N}}\rangle_{\Lambda_{M,N}}$ since the latter is the ratio of two partition functions (see Lemma \ref{lemmgf}).
\end{remark}

It is well-known that the critical inverse temperature for the two-dimensional Ising model is $\beta_c=\ln(1+\sqrt{2})/2$. We will use the following computations many times in the paper.

\begin{lemma}\label{lemvalues}
\begin{align}
&\sinh(2\beta_c)=1, \cosh(2\beta_c)=\sqrt{2}, \cosh(\gamma_{\theta})|_{\beta=\beta_c}=2-\cos\theta,\\ &\sinh(\gamma_{\theta})|_{\beta=\beta_c}=\sqrt{3-4\cos\theta+\cos^2\theta}.
\end{align}
\end{lemma}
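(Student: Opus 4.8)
The plan is to reduce everything to the explicit value of $e^{2\beta_c}$ and then substitute into the defining relation~\eqref{eqgamma}. First I would record that $\beta_c=\ln(1+\sqrt 2)/2$ gives $2\beta_c=\ln(1+\sqrt 2)$, hence $e^{2\beta_c}=1+\sqrt 2$ and, after rationalizing $1/(1+\sqrt 2)$, $e^{-2\beta_c}=\sqrt 2-1$. From these two identities the hyperbolic values follow directly from the definitions $\sinh x=(e^x-e^{-x})/2$ and $\cosh x=(e^x+e^{-x})/2$: subtracting gives $\sinh(2\beta_c)=((1+\sqrt 2)-(\sqrt 2-1))/2=1$, and adding gives $\cosh(2\beta_c)=((1+\sqrt 2)+(\sqrt 2-1))/2=\sqrt 2$. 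This establishes the first two claimed equalities.

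For the value of $\cosh(\gamma_\theta)$ at criticality I would substitute these into~\eqref{eqgamma}. Since $\coth(2\beta_c)=\cosh(2\beta_c)/\sinh(2\beta_c)=\sqrt 2/1=\sqrt 2$, the product $\coth(2\beta_c)\cosh(2\beta_c)$ equals $\sqrt 2\cdot\sqrt 2=2$, so~\eqref{eqgamma} collapses to $\cosh(\gamma_\theta)|_{\beta=\beta_c}=2-\cos\theta$, which is the third identity.

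Finally, the value of $\sinh(\gamma_\theta)$ follows from the identity $\sinh^2=\cosh^2-1$. Squaring the third identity gives $\sinh^2(\gamma_\theta)|_{\beta=\beta_c}=(2-\cos\theta)^2-1=3-4\cos\theta+\cos^2\theta$. Because Lemma~\ref{lempartf} stipulates $\gamma_\theta\geq 0$, we have $\sinh(\gamma_\theta)\geq 0$, so taking the nonnegative square root yields $\sinh(\gamma_\theta)|_{\beta=\beta_c}=\sqrt{3-4\cos\theta+\cos^2\theta}$. There is no genuine obstacle here beyond routine bookkeeping; the only point requiring a moment's care is the sign choice for the square root, which is pinned down by the convention $\gamma_\theta\geq 0$ carried over from~\eqref{eqgamma}.
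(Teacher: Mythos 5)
Your proposal is correct: the paper itself dismisses this as ``trivial computations,'' and your argument is exactly the intended one, spelled out --- evaluating $e^{\pm 2\beta_c}=1\pm\sqrt{2}$ (up to sign), substituting into \eqref{eqgamma}, and using $\sinh^2=\cosh^2-1$ with the sign fixed by the convention $\gamma_\theta\geq 0$. Nothing further is needed.
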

\begin{proof}
The lemma follows from trivial computations.
\end{proof}

\begin{lemma}\label{lemmgf}
For any $s\in\mathbb{R}$ and $\beta\geq 0$,
\begin{equation}
\langle e^{s E_{M,N}}\rangle_{M,N}^{\beta}=\frac{Z_{M,N}(\beta-s)}{Z_{M,N}(\beta)}.
\end{equation}
\end{lemma}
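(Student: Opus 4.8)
The plan is to prove this identity by a direct computation, unwinding the definition of the Gibbs measure $\mathbb{P}_{M,N}^{\beta}$ and exploiting the fact that both $e^{sE_{M,N}}$ and the Boltzmann weight $e^{-\beta E_{M,N}}$ are exponentials of the \emph{same} observable $E_{M,N}$. Because of this, tilting the measure by $e^{sE_{M,N}}$ simply shifts the inverse temperature from $\beta$ to $\beta-s$, which is the content of the lemma.

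First I would write the expectation as a finite sum over all spin configurations,
\begin{equation*}
\langle e^{sE_{M,N}}\rangle_{M,N}^{\beta}=\sum_{\sigma\in\{-1,+1\}^{\Lambda_{M,N}}}e^{sE_{M,N}(\sigma)}\,\mathbb{P}_{M,N}^{\beta}(\sigma).
\end{equation*}
Substituting the definition $\mathbb{P}_{M,N}^{\beta}(\sigma)=Z_{M,N}(\beta)^{-1}e^{-\beta E_{M,N}(\sigma)}$ and pulling the $\sigma$-independent normalization $Z_{M,N}(\beta)^{-1}$ out of the sum, the two exponentials combine via $e^{sE_{M,N}(\sigma)}e^{-\beta E_{M,N}(\sigma)}=e^{-(\beta-s)E_{M,N}(\sigma)}$.

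The final step is to recognize the remaining sum $\sum_{\sigma}e^{-(\beta-s)E_{M,N}(\sigma)}$ as exactly the partition function $Z_{M,N}(\beta-s)$ in its defining form \eqref{eqpartf}, now with the inverse-temperature argument replaced by $\beta-s$. Dividing by $Z_{M,N}(\beta)$ then produces the claimed ratio.

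I do not expect any genuine obstacle here: every sum involved is finite, so the manipulation is purely algebraic and requires no convergence or interchange-of-limits argument. The only points worth stating explicitly are that the identity holds for \emph{every} real $s$ (and hence for $\beta-s$ possibly negative), since \eqref{eqpartf} defines $Z_{M,N}$ as a finite sum of strictly positive Boltzmann weights for any real inverse temperature, and that $Z_{M,N}(\beta)>0$ always, so the ratio on the right-hand side is well defined.
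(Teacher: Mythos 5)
Your proof is correct and is essentially the same as the paper's: the paper's one-line proof writes $\langle e^{sE_{M,N}}\rangle_{M,N}^{\beta}$ as $\sum_{\sigma}e^{sE_{M,N}}e^{-\beta E_{M,N}}/Z_{M,N}(\beta)$ and identifies the numerator as $Z_{M,N}(\beta-s)$, exactly as you do. Your additional remarks (finiteness of the sum, positivity of $Z_{M,N}$, validity for all real $s$) are sound but not needed beyond what the paper states.
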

\begin{proof}
\begin{equation}
\langle e^{s E_{M,N}}\rangle_{M,N}^{\beta}=\frac{\sum_{\sigma}e^{sE_{M,N}}e^{-\beta E_{M,N}}}{Z_{M,N}(\beta)}=\frac{Z_{M,N}(\beta-s)}{Z_{M,N}(\beta)}.
\end{equation}
\end{proof}

By Lemma \ref{lempartf}, we have
\begin{align}\label{eqZtoL}
\ln Z_{M,N}(\beta)=2MN\ln(2\sinh(2\beta))-2N\ln(\cosh(\beta))+2M\sum_{\theta}\gamma_{\theta}+\sum_{\theta}f_{\theta},
\end{align}
where
\begin{equation}\label{eqf}
f_{\theta}:=\ln[1+e^{-4M\gamma_{\theta}}+(1-e^{-4M \gamma_{\theta}})g_{\theta}]-\ln 2.
\end{equation}
We define $L_i$ for $i=1,2,3,4$ by
\begin{align}
&L_1(\beta):=2MN\ln(2\sinh(2\beta)), L_2(\beta):=2N\ln(\cosh(\beta)), \label{eqL1L2}\\
&L_3(\beta):=2M\sum_{\theta}\gamma_{\theta}, L_4(\beta):=\sum_{\theta}f_{\theta}.\label{eqL3L4}
\end{align}

In the rest of this paper, we always assume $M$ is a function of $N$ satisfying \eqref{eqM}. Theorem~\ref{thm} will follow from the following estimates about $L_i$'s.
\begin{proposition}\label{prop}
Suppose $M$ is a function of $N$ satisfying \eqref{eqM}. Then for each $t\geq0$, we have
\begin{align}
&\lim_{N\rightarrow\infty}\left[L_1\left(\beta_c-t/\sqrt{4MN\ln N}\right)-L_1(\beta_c)+\frac{t}{\sqrt{4MN\ln N}}4\sqrt{2}MN\right]=0,\label{eqL1e}\\
&\lim_{N\rightarrow\infty}\left[L_2\left(\beta_c-t/\sqrt{4MN\ln N}\right)-L_2(\beta_c)\right]=0,\label{eqL2e}\\
&\lim_{N\rightarrow\infty}\left[L_3\left(\beta_c-t/\sqrt{4MN\ln N}\right)-L_3(\beta_c)\right]=\frac{4t^2}{\pi},\label{eqL3e}\\
&\lim_{N\rightarrow\infty}\left[L_4\left(\beta_c-t/\sqrt{4MN\ln N}\right)-L_4(\beta_c)-\frac{t}{\sqrt{4MN\ln N}}\frac{4}{\pi}N\ln N\right]=0.\label{eqL4e}
\end{align}
\end{proposition}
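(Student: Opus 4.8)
The plan is to prove the four estimates separately, writing throughout $s:=t/\sqrt{4MN\ln N}$, so that $s\to0$ and $\beta_c-s$ is the inverse temperature occurring in each limit. The algebraic device that organizes everything is the identity
\[
\coth(2\beta)\cosh(2\beta)-2=\frac{(1-\sinh(2\beta))^2}{\sinh(2\beta)}=:a(\beta),
\]
which, using $\sinh(2\beta_c)=1$ from Lemma~\ref{lemvalues}, vanishes to second order at $\beta_c$: one finds $a(\beta_c)=0$, $a'(\beta_c)=0$, and $a(\beta_c-s)=8s^2+O(s^3)$. Since $\cosh(\gamma_\theta)=a(\beta)+2-\cos\theta$, the $\beta$-dependence of all $\theta$-sums enters only through the small quantity $a$, and $d\gamma_\theta/da=1/\sinh(\gamma_\theta)$ with $\sinh(\gamma_\theta)|_{\beta_c}=\sqrt{(1-\cos\theta)(3-\cos\theta)}$, which behaves like $\theta$ as $\theta\to0$. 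This last near-origin behavior is the source of every logarithm below.

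The identities \eqref{eqL1e} and \eqref{eqL2e} are elementary. A direct computation gives $L_1'(\beta_c)=2MN\cdot2\coth(2\beta_c)=4\sqrt2\,MN$, so the term added in \eqref{eqL1e} cancels the first-order Taylor term exactly and leaves a remainder $\tfrac12 L_1''(\xi)s^2=O(MN s^2)=O(1/\ln N)\to0$; for \eqref{eqL2e} one has $L_2'(\beta_c)=2N\tanh(\beta_c)=O(N)$, so the difference is $O(Ns)=O(\sqrt{N/(M\ln N)})\to0$ by \eqref{eqM}. For \eqref{eqL3e} I would expand $\gamma_\theta(\beta_c-s)-\gamma_\theta(\beta_c)=a(\beta_c-s)/\sinh(\gamma_\theta)|_{\beta_c}+(\text{error})$, so that the leading term of $L_3(\beta_c-s)-L_3(\beta_c)$ is $16M s^2\sum_\theta [(1-\cos\theta)(3-\cos\theta)]^{-1/2}$. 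Comparing the summand (which is $\sim1/\theta$ near $\theta=0$) with $\sum_\theta 1/\theta_n=\tfrac{2N}{\pi}\sum_{n=1}^N\tfrac1{2n-1}$ shows this sum equals $\tfrac{N\ln N}{\pi}+O(N)$; since $16Ms^2=4t^2/(N\ln N)$, the limit is $4t^2/\pi$. The error is the second-order term in the $a$-expansion, where $d^2\gamma_\theta/da^2=-\cosh(\gamma_\theta)/\sinh^3(\gamma_\theta)\sim-\theta^{-3}$; summing gives $2M\sum_\theta a^2/\theta^3=O(MN^3 s^4)=O(N/(M\ln^2 N))$, which $\to0$ under \eqref{eqM}.

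The estimate \eqref{eqL4e} is the main obstacle. I would write $L_4(\beta_c-s)-L_4(\beta_c)=-\int_{\beta_c-s}^{\beta_c}L_4'(\beta)\,d\beta$ and, with $x:=e^{-4M\gamma_\theta}$, compute
\[
f_\theta'=\frac{g_\theta'(1-x)-4M\gamma_\theta'\,x(1-g_\theta)}{1+g_\theta+x(1-g_\theta)}.
\]
The second numerator term uses $\gamma_\theta'=a'(\beta)/\sinh(\gamma_\theta)$ and, because $\int_{\beta_c-s}^{\beta_c}(-4M a'(\beta))\,d\beta=32Ms^2=O(1/(N\ln N))$, its total contribution is $O(\ln(N/M)/\ln N)\to0$. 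The first term is the heart of the matter: one computes $g_\theta'(\beta_c)=-2(1+\cos\theta)/\sinh(\gamma_\theta)\sim-4/\theta$, so if $x$ were negligible this term would integrate to $s\sum_\theta(-g_\theta'(\beta_c))=s\bigl(\tfrac{4}{\pi}N\ln N+O(N)\bigr)$, matching the subtracted term in \eqref{eqL4e} up to $O(sN)\to0$.

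The genuine difficulty is that for small $\theta$ the exponential $x$ is not negligible: the factor $(1-x)/(1+x)=\tanh(2M\gamma_\theta)$ departs from $1$, so that $\sum_\theta 1/\theta\sim\tfrac{N\ln N}{\pi}$ is effectively replaced by $\sum_\theta \tanh(2M\gamma_\theta)/\theta\sim\tfrac{N\ln M}{\pi}$, the $\tanh$ cutting the sum off at $\theta\sim1/M$ rather than at the grid scale $\theta\sim1/N$. This produces a discrepancy $-s\cdot\tfrac{4}{\pi}N\ln(N/M)$, of order $\sqrt{N/(M\ln N)}\,\ln(N/M)$, and it is exactly here that hypothesis \eqref{eqM}, with its factor $(\ln\ln N)^2$, is needed to force the discrepancy to $0$. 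The remaining steps I expect to be routine but delicate: verifying that $M\gamma_\theta$ (hence $x$) changes by $o(1)$ uniformly in $\theta$ as $\beta$ ranges over $[\beta_c-s,\beta_c]$, so the integrand can be frozen at $\beta_c$ with negligible error; and handling the bulk $\theta$ of order $1$, where $g_\theta$ is not small and $x$ is exponentially small, through the convergent $O(N)$ parts of the sums. The crux of the whole proof is thus the precise evaluation of $\sum_\theta \tanh(2M\gamma_\theta)/\theta$ and the reconciliation of its $\ln M$ with the target $\ln N$ via \eqref{eqM}.
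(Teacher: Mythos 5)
Your proposal is correct in substance and follows the paper's master plan — treat $L_1,\dots,L_4$ separately, expand around $\beta_c$, extract the leading behavior from the harmonic sums $\sum_\theta\theta^{-1}$, and invoke \eqref{eqM} to kill the terms carrying $e^{-4M\gamma_\theta}$ — but two of your devices genuinely differ from the paper's and are worth comparing. (i) For \eqref{eqL3e} you reparametrize by $a(\beta)=\coth(2\beta)\cosh(2\beta)-2=(1-\sinh(2\beta))^2/\sinh(2\beta)$, which vanishes to second order at $\beta_c$, and expand $\gamma_\theta$ to second order in $a$ using $d\gamma_\theta/da=\csch(\gamma_\theta)$; your remainder is $\lesssim Ma^2\sum_\theta\theta^{-3}=O(MN^3s^4)=O\bigl(N/(M\ln^2N)\bigr)\to0$. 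The paper instead Taylor-expands to third order in $\beta$, which forces the unwieldy explicit formula \eqref{eqgamma'''} for $\gamma_\theta'''$ together with Lemma~\ref{lemsumcsch}; your route reaches the same leading term $\frac{4t^2}{N\ln N}\sum_\theta[(1-\cos\theta)(3-\cos\theta)]^{-1/2}\to 4t^2/\pi$ with visibly less computation. (ii) For \eqref{eqL4e}, the paper Taylor-expands $f_\theta$ and controls the dangerous sums via Lemma~\ref{lemsumgamma}, i.e.\ a cutoff of the harmonic sum at $n=\lfloor\ln N\rfloor$ giving $\sum_\theta e^{-4M\gamma_\theta}\theta^{-1}\leq CN\ln\ln N$; you instead cut off at $\theta\sim1/M$ via $(1-x)/(1+x)=\tanh(2M\gamma_\theta)$, identifying the boundary discrepancy sharply as of order $sN\ln_{+}(N/M)$. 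Under \eqref{eqM} one has $\ln_{+}(N/M)\leq\ln\ln N$ eventually, so the two bounds are equivalent in effect, but yours is more informative: it exhibits the condition actually needed, $\sqrt{N/(M\ln N)}\,\ln_{+}(N/M)\to0$, and thus explains the $(\ln\ln N)^2$ in \eqref{eqM}. Two small repairs: your $\sum_\theta\tanh(2M\gamma_\theta)/\theta\sim\frac{N}{\pi}\ln M$ should read $\frac{N}{\pi}\ln\min(M,N)$, since \eqref{eqM} permits $M\geq N$ (in which case there is no discrepancy beyond $O(N)$); and the steps you defer as ``routine but delicate'' — freezing $x$, $g_\theta$, $g_\theta'$ at $\beta_c$ along $[\beta_c-s,\beta_c]$ and handling bulk $\theta$ — are precisely the content of the paper's Lemma~\ref{lemL4second}, and they do go through because $s^2\sum_\theta\sup|f_\theta''|\lesssim s^2N^2=t^2N/(4M\ln N)\to0$ by \eqref{eqM}, while $\sum_\theta(-g_\theta')\,g_\theta/(1+g_\theta)=O(N)$ disposes of the denominator in the bulk. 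Note finally that your lower bound on the denominator (needed for all these estimates) rests on $g_\theta\geq0$, i.e.\ on $\beta\leq\beta_c$, which is the one place where $t\geq0$ enters — exactly as in the paper.
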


Let us prove Theorem \ref{thm} under the assumption of Proposition \ref{prop}.
\begin{proof}[Proof of Theorem \ref{thm} (modulo proving Proposition \ref{prop})]
For $\hat{E}_{M, N}$ defined in \eqref{eqEhat}, we have by Lemma~\ref{lemmgf} that
\begin{align*}
&\ln\left\langle e^{t\hat{E}_{M,N}}\right\rangle_{M,N}^{\beta_c}=\ln\left\langle e^{tE_{M,N}/\sqrt{4MN\ln N}}\right\rangle_{M, N}^{\beta_c}+\frac{t}{\sqrt{4MN\ln N}}\left[4\sqrt{2}MN-\frac{4}{\pi}N\ln N\right]\\
&=\ln Z_{M,N}\left(\beta_c-t/\sqrt{4MN\ln N}\right)-\ln Z_{M,N}(\beta_c)+\frac{t}{\sqrt{4MN\ln N}}\left[4\sqrt{2}MN-\frac{4}{\pi}N\ln N\right].
\end{align*}
Using \eqref{eqZtoL}-\eqref{eqL3L4}, we can write
\begin{align*}
&\ln\left\langle e^{t\hat{E}_{M,N}}\right\rangle_{M,N}^{\beta_c}=\left[L_1\left(\beta_c-t/\sqrt{4MN\ln N}\right)-L_1(\beta_c)+\frac{t}{\sqrt{4MN\ln N}}4\sqrt{2}MN\right]\\
&-\left[L_2\left(\beta_c-t/\sqrt{4MN\ln N}\right)-L_2(\beta_c)\right]+\left[L_3\left(\beta_c-t/\sqrt{4MN\ln N}\right)-L_3(\beta_c)\right]\\
&+\left[L_4\left(\beta_c-t/\sqrt{4MN\ln N}\right)-L_4(\beta_c)-\frac{t}{\sqrt{4MN\ln N}}\frac{4}{\pi}N\ln N\right].
\end{align*}
This completes the proof of the first part of Theorem~\ref{thm} (i.e., \eqref{eqmgf}) by applying Proposition~\ref{prop}. The second part of Theorem \ref{thm} follows from a standard probability argument (see, e.g., Problem 30.4 of \cite{Bil95}).
\end{proof}

The first two limits in Proposition \ref{prop} are easy to prove.
\begin{proof}[Proof of \eqref{eqL1e} and \eqref{eqL2e} in Proposition \ref{prop}]
Note that $L_1^{\prime}(\beta)=4MN\coth(2\beta)$ and $L_1^{\prime\prime}(\beta)=-8MN\csch^2(2\beta)$. So by the Taylor expansion of $L_1$ around $\beta_c$ and Lemma \ref{lemvalues}, we have
\begin{equation}
L_1\left(\beta_c-t/\sqrt{4MN\ln N}\right)-L_1(\beta_c)=-\frac{t}{\sqrt{4MN\ln N}}4\sqrt{2}MN+\frac{t^2}{8MN\ln N}L^{\prime\prime}(\tilde{\beta}),
\end{equation}
where $\tilde{\beta}\in\left(\beta_c-t/\sqrt{4MN\ln N},\beta_c\right)$. By Lemma \ref{lemvalues}, $|L_1^{\prime\prime}(\tilde{\beta})|=8MN\csch^2(2\tilde{\beta})\leq 16MN$ for any $\tilde{\beta}\in\left(\beta_c-t/\sqrt{4MN\ln N},\beta_c\right)$ if $N$ is large. This completes the proof of \eqref{eqL1e}. Similarly, the Taylor expansion of $L_2$ around $\beta_c$ gives
\begin{equation}
L_2\left(\beta_c-t/\sqrt{4MN\ln N}\right)-L_2(\beta_c)=-\frac{t}{\sqrt{4MN\ln N}}2N\tanh(\tilde{\beta})
\end{equation}
where $\tilde{\beta}\in\left(\beta_c-t/\sqrt{4MN\ln N},\beta_c\right)$. It is clear that $|\tanh(\tilde{\beta})|\leq 1$ for any such $\tilde{\beta}$ whenever $N$ is large. Combining this and our assumption on $M$ (i.e., \eqref{eqM}) completes the proof of \eqref{eqL2e}.
\end{proof}

The following three lemmas will be very useful when we deal with the Taylor expansions of $L_3(\beta)$ and $L_4(\beta)$.
\begin{lemma}\label{lemest}
\begin{equation}\label{eqinf}
\inf_{\beta>0}\coth(2\beta)\cosh(2\beta)=2 \text{ with the infimum achieved at }\beta=\beta_c.
\end{equation}
For each large $N$, each $\beta\in\left(\beta_c-1/\sqrt{4MN\ln N},\beta_c+1/\sqrt{4MN\ln N}\right)$ and each $\theta\in(0,\pi]$, we have
\begin{align}
\left|1-\csch(2\beta)\right|\leq 8/\sqrt{4MN\ln N},\label{eqest1}\\
\left|[1-\csch(2\beta)]\csch(\gamma_{\theta})\right|\leq \sqrt{2},\label{eqest2}\\
\left|[\csch(2\beta)-\cos\theta]\csch(\gamma_{\theta})\right|\leq 3.\label{eqest3}
\end{align}
\end{lemma}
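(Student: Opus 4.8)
The plan is to reduce all four estimates to elementary inequalities in the single variable $c:=\csch(2\beta)$. The starting point is the identity
\[
\coth(2\beta)\cosh(2\beta)=\frac{\cosh^2(2\beta)}{\sinh(2\beta)}=\frac{1+\sinh^2(2\beta)}{\sinh(2\beta)}=\csch(2\beta)+\sinh(2\beta)=c+\frac1c,
\]
which, fed into \eqref{eqgamma}, gives the clean formula $\cosh(\gamma_\theta)=c+1/c-\cos\theta$ (consistent with Lemma~\ref{lemvalues}, since at $\beta=\beta_c$ one has $c=1$ and $\cosh\gamma_\theta=2-\cos\theta$). Because $c+1/c\ge 2$ for all $c>0$, with equality exactly when $c=1$, i.e.\ $\sinh(2\beta)=1$, i.e.\ $\beta=\beta_c$, the claim \eqref{eqinf} is immediate.

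For \eqref{eqest1} I would use $\csch(2\beta_c)=1$ together with the mean value theorem: since $\frac{d}{d\beta}\csch(2\beta)=-2\csch(2\beta)\coth(2\beta)$, there is $\tilde\beta$ between $\beta$ and $\beta_c$ with $|1-\csch(2\beta)|=2\csch(2\tilde\beta)\coth(2\tilde\beta)\,|\beta-\beta_c|$. As $\beta$ ranges over the stated shrinking interval, $\tilde\beta\to\beta_c$, where the prefactor equals $2\csch(2\beta_c)\coth(2\beta_c)=2\sqrt2$; by continuity it stays below $8$ for large $N$, and $|\beta-\beta_c|\le 1/\sqrt{4MN\ln N}$ finishes the bound.

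The two remaining estimates both exploit the factorization
\[
\sinh^2(\gamma_\theta)=(\cosh\gamma_\theta-1)(\cosh\gamma_\theta+1),\qquad \cosh\gamma_\theta-1=\frac{(1-c)^2}{c}+(1-\cos\theta),
\]
together with the trivial bound $\cosh\gamma_\theta+1\ge 2$. For \eqref{eqest2}, dropping the nonnegative term $1-\cos\theta$ gives $\sinh^2\gamma_\theta\ge 2(1-c)^2/c$, whence $\{[1-\csch(2\beta)]\csch\gamma_\theta\}^2\le c/2=\csch(2\beta)/2$, which is below $2$ for large $N$ since $\csch(2\beta)\to 1$. For \eqref{eqest3} I would write $c-\cos\theta=(c-1)+(1-\cos\theta)$ and use $(a+b)^2\le 2a^2+2b^2$ to bound the numerator by $2(c-1)^2+2(1-\cos\theta)^2$; combining this with $\sinh^2\gamma_\theta\ge 2[(1-c)^2/c+(1-\cos\theta)]$ and the elementary facts $(1-\cos\theta)^2\le 2(1-\cos\theta)$ and $c\le 2$ collapses the ratio to at most $2$, so that $|[\csch(2\beta)-\cos\theta]\csch\gamma_\theta|\le\sqrt2\le 3$.

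The main obstacle is \eqref{eqest3}: near $\theta=0$ and $\beta=\beta_c$ the factor $\csch(2\beta)-\cos\theta$ and $\sinh\gamma_\theta$ vanish simultaneously, so any crude bound fails and one must track their ratio through the exact factorization above. The delicate point is recognizing that $\cosh\gamma_\theta-1$ splits as the sum of the two nonnegative quantities $(1-c)^2/c$ and $1-\cos\theta$, which are precisely the two pieces matched by the numerator after the split $c-\cos\theta=(c-1)+(1-\cos\theta)$; this alignment is what makes the ratio bounded uniformly in $\theta$.
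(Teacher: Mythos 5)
Your proof is correct, and structurally it follows the same route as the paper: \eqref{eqinf} by elementary means, \eqref{eqest1} by the mean value theorem around $\beta_c$, and \eqref{eqest2}--\eqref{eqest3} by factoring $\sinh^2(\gamma_{\theta})=(\cosh(\gamma_{\theta})-1)(\cosh(\gamma_{\theta})+1)$ and splitting the numerator into a $\beta$-part and a $\theta$-part matched against the two pieces of $\cosh(\gamma_{\theta})-1$. The genuine difference is in how the $\beta$-part is controlled. The paper handles the ratio $|1-\csch(2\beta)|/\sqrt{\coth(2\beta)\cosh(2\beta)-2}$ only through the limit statement \eqref{eqlim}, a continuity argument as $\beta\to\beta_c$; your substitution $c=\csch(2\beta)$ together with the identity $\coth(2\beta)\cosh(2\beta)=c+1/c$ gives $\coth(2\beta)\cosh(2\beta)-2=(1-c)^2/c$, so that this ratio is \emph{exactly} $\sqrt{c}$ and \eqref{eqlim} becomes an identity rather than a limit. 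Consequently your estimates are pure algebra with explicit, sharper constants: the square in \eqref{eqest2} is at most $c/2$, and after the elementary inequality $(a+b)^2\le 2a^2+2b^2$ the square in \eqref{eqest3} collapses to exactly $2$, i.e.\ you get $\sqrt{2}$ where the paper settles for $3$. What this buys is a proof free of limiting/uniformity considerations on the shrinking interval (beyond the single fact that $MN\ln N\to\infty$, which both proofs need); what the paper's version buys is brevity, since \eqref{eqlim} is stated once and reused for both \eqref{eqest2} and \eqref{eqest3}. Two small points worth making explicit in your write-up: the division by $(1-c)^2$ in \eqref{eqest2} should be accompanied by the trivial remark that the left-hand side vanishes when $c=1$; and the denominators in your final ratios are strictly positive because $1-\cos\theta>0$ for $\theta\in(0,\pi]$.
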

\begin{proof}
The proof of \eqref{eqinf} is trivial. The inequality \eqref{eqest1} follows from the monotonicity of $\csch$, Lemma \ref{lemvalues} and the mean value theorem. The inequality \eqref{eqest2} follows from
\begin{align*}
&\left|[1-\csch(2\beta)]\csch(\gamma_{\theta})\right|\\
=&\frac{|1-\csch(2\beta)|}{\sqrt{[\coth(2\beta)\cosh(2\beta)+1-\cos\theta][\coth(2\beta)\cosh(2\beta)-1-\cos\theta]}}\\
\leq &\frac{1}{\sqrt{2}}\frac{|1-\csch(2\beta)|}{\sqrt{\coth(2\beta)\cosh(2\beta)-2}} \text{\qquad by \eqref{eqinf}}
\end{align*}
and
\begin{equation}\label{eqlim}
\lim_{\beta\rightarrow\beta_c}\frac{|1-\csch(2\beta)|}{\sqrt{\coth(2\beta)\cosh(2\beta)-2}}=1.
\end{equation}
The inequality \eqref{eqest3} follows from
\begin{align*}
&|[\csch(2\beta)-\cos\theta]\csch(\gamma_{\theta})|\\
=&\frac{|\csch(2\beta)-\cos\theta|}{\sqrt{[\coth(2\beta)\cosh(2\beta)+1-\cos\theta][\coth(2\beta)\cosh(2\beta)-1-\cos\theta]}}\\
\leq &\frac{1}{\sqrt{2}}\frac{|\csch(2\beta)-1|+|1-\cos\theta|}{\sqrt{\coth(2\beta)\cosh(2\beta)-2+1-\cos\theta}} \text{\qquad by \eqref{eqinf}}\\
\leq &\frac{1}{\sqrt{2}}\left[\frac{|\csch(2\beta)-1|}{\sqrt{\coth(2\beta)\cosh(2\beta)-2}}+\frac{|1-\cos\theta|}{\sqrt{1-\cos\theta}}\right]
\end{align*}
and \eqref{eqlim}.
\end{proof}

\begin{lemma}\label{lemharmonic}
There exist constants $C_1,C_2\in(0,\infty)$ such that for all large $N\in\mathbb{N}$,
\begin{equation}
\left|\sum_{n=1}^{N}\frac{1}{2n-1}-\frac{\ln N}{2}\right|\leq C_1, \left|\sum_{n=\lfloor\ln N\rfloor+1}^{N}\frac{1}{2n-1}-\frac{\ln N}{2}\right|\leq C_2\ln\ln N.
\end{equation}
\end{lemma}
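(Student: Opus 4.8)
The plan is to reduce both inequalities to the single fact that the sequence
\[
a_N := \sum_{n=1}^{N}\frac{1}{2n-1}-\frac{\ln N}{2}
\]
converges as $N\to\infty$, and in particular is bounded; everything else is then bookkeeping.

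First I would prove that $(a_N)$ is bounded. The cleanest route is to rewrite the odd-reciprocal sum in terms of ordinary harmonic numbers $H_k:=\sum_{j=1}^{k}1/j$. Since the odd integers $1,3,\dots,2N-1$ and the even integers $2,4,\dots,2N-2$ together exhaust $\{1,\dots,2N-1\}$, one has $\sum_{n=1}^{N}\frac{1}{2n-1}=H_{2N-1}-\tfrac12 H_{N-1}$. Inserting the standard asymptotic $H_k=\ln k+\gamma+O(1/k)$ yields $\sum_{n=1}^{N}\frac{1}{2n-1}=\tfrac12\ln N+\ln 2+\tfrac{\gamma}{2}+O(1/N)$, so $a_N\to\ln 2+\gamma/2$. (If one prefers to avoid the Euler--Mascheroni constant, the same boundedness follows from comparing the sum with $\int_1^N\frac{dx}{2x-1}=\tfrac12\ln(2N-1)$ and using the monotonicity of $x\mapsto 1/(2x-1)$.) Being convergent, the sequence is bounded: there is $C_1\in(0,\infty)$ with $|a_N|\le C_1$ for all $N\ge 1$, which is precisely the first claimed inequality.

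For the second inequality I would peel off the initial block. Writing $m:=\lfloor\ln N\rfloor$ and using the definition of $a_N$ twice gives
\[
\sum_{n=m+1}^{N}\frac{1}{2n-1}-\frac{\ln N}{2}
=\Big(a_N+\tfrac12\ln N\Big)-\Big(a_m+\tfrac12\ln m\Big)-\tfrac12\ln N
=a_N-a_m-\tfrac12\ln m .
\]
Hence, using $|a_N|,|a_m|\le C_1$ together with $\ln m=\ln\lfloor\ln N\rfloor\le\ln\ln N$, we obtain
\[
\Big|\sum_{n=m+1}^{N}\frac{1}{2n-1}-\frac{\ln N}{2}\Big|\le 2C_1+\tfrac12\ln\ln N .
\]
Since $\ln\ln N\to\infty$, for all large $N$ the right-hand side is at most $C_2\ln\ln N$ for any fixed $C_2>1/2$, which proves the second inequality. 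The only point requiring a little care is that $m=\lfloor\ln N\rfloor\ge 1$, so that $\ln m$ and $a_m$ are defined; this holds for all large $N$. No genuine obstacle arises, as the entire lemma rests on the elementary asymptotics of the harmonic series.
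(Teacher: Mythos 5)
Your proof is correct and follows essentially the same route as the paper: both arguments rewrite the odd-reciprocal sum in terms of harmonic numbers (your $H_{2N-1}-\tfrac12 H_{N-1}$ versus the paper's $H_{2N}-\tfrac12 H_N$, trivially equivalent) and invoke the standard asymptotic $H_k=\ln k+\gamma+O(1/k)$, with the second inequality obtained by peeling off the initial block $n\le\lfloor\ln N\rfloor$, whose contribution is $\tfrac12\ln\lfloor\ln N\rfloor+O(1)=O(\ln\ln N)$.
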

\begin{proof}
Let $H_n:=\sum_{k=1}^n 1/k$ be the $n$-th \textbf{harmonic number}. It is well-known that
\[\frac{1}{2(n+1)}\leq H_n-\ln n-\hat{\gamma}\leq \frac{1}{2n} \text{ for each }n\in \mathbb{N},\]
where $\hat{\gamma}$ is the Euler-Mascheroni constant. The lemma follows by the following observation:
\begin{align*}
&\sum_{n=1}^{N}\frac{1}{2n-1}=H_{2N}-\frac{H_N}{2},\\
&\sum_{n=\lfloor\ln N\rfloor+1}^{N}\frac{1}{2n-1}=H_{2N}-\frac{H_N}{2}-\left[H_{2\lfloor\ln N\rfloor}-\frac{H_{\lfloor\ln N\rfloor}}{2}\right].
\end{align*}
\end{proof}

\begin{lemma}\label{lemsumcsch}
There exist constants $C_3, C_4\in(0,\infty)$ such that for all large $N$ and all $\beta>0$,
\begin{equation}
\sum_{\theta}\csch(\gamma_{\theta})\leq C_{3}N\ln N,\qquad \sum_{\theta}\csch^2(\gamma_{\theta})\leq C_{4}N^2.
\end{equation}
\end{lemma}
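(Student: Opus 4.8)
The plan is to reduce both inequalities to the single temperature $\beta=\beta_c$ and then estimate the resulting sums by comparison with harmonic-type series. First I would observe that, for each fixed $\theta$, the quantity $\csch(\gamma_\theta)$ is maximized over $\beta>0$ at $\beta=\beta_c$. Indeed, by \eqref{eqgamma} we have $\cosh(\gamma_\theta)=\coth(2\beta)\cosh(2\beta)-\cos\theta$, and by \eqref{eqinf} the factor $\coth(2\beta)\cosh(2\beta)$ is minimized over $\beta>0$ at $\beta=\beta_c$, where it equals $2$. Since $\gamma_\theta\geq 0$ and $\cosh$ is increasing on $[0,\infty)$, the value $\gamma_\theta$ is smallest at $\beta=\beta_c$; and since $\csch=1/\sinh$ is decreasing on $(0,\infty)$, it follows that $\csch(\gamma_\theta)\leq\csch(\gamma_\theta)|_{\beta=\beta_c}$ for every $\beta>0$ and every $\theta$. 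This single monotonicity step is the crux, and the one I expect to be the main obstacle: the lemma asks for a bound uniform in all $\beta>0$, and it is precisely the infimum statement \eqref{eqinf} that legitimizes collapsing the whole range of $\beta$ onto the critical point.

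Having reduced to $\beta=\beta_c$, I would use Lemma~\ref{lemvalues} together with the factorization
\[
\sinh^2(\gamma_\theta)\big|_{\beta_c}=(2-\cos\theta)^2-1=(1-\cos\theta)(3-\cos\theta)\geq 2(1-\cos\theta)=4\sin^2(\theta/2),
\]
so that $\sinh(\gamma_\theta)|_{\beta_c}\geq 2\sin(\theta/2)$. Applying Jordan's inequality $\sin(\theta/2)\geq\theta/\pi$ for $\theta\in(0,\pi]$ gives $\csch(\gamma_\theta)|_{\beta_c}\leq\pi/(2\theta)$, and substituting $\theta=\pi(2n-1)/(2N)$ yields the clean bound $\csch(\gamma_\theta)|_{\beta_c}\leq N/(2n-1)$.

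Finally I would sum over $\theta$. For the first inequality,
\[
\sum_{\theta}\csch(\gamma_\theta)\leq\sum_{n=1}^{N}\frac{N}{2n-1}=N\sum_{n=1}^{N}\frac{1}{2n-1}\leq N\left(\tfrac{1}{2}\ln N+C_1\right)
\]
for large $N$ by Lemma~\ref{lemharmonic}, which is at most $C_3 N\ln N$ for a suitable constant $C_3$. For the second inequality,
\[
\sum_{\theta}\csch^2(\gamma_\theta)\leq\sum_{n=1}^{N}\frac{N^2}{(2n-1)^2}\leq N^2\sum_{n=1}^{\infty}\frac{1}{(2n-1)^2}=\frac{\pi^2}{8}N^2,
\]
so $C_4=\pi^2/8$ works. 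Apart from the monotonicity reduction, every step is a routine elementary estimate.
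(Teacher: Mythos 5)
Your proof is correct and takes essentially the same route as the paper: both arguments use \eqref{eqinf} to bound $\csch(\gamma_{\theta})$ uniformly in $\beta>0$ by its value at $\beta_c$ (you via monotonicity of $\cosh$ and $\csch$, the paper by substituting the bound directly into \eqref{eqcschgamma}), then deduce an estimate of the form $\csch(\gamma_{\theta})\leq C/\theta$ and sum using Lemma~\ref{lemharmonic} together with $\sum_{n\geq1}1/(2n-1)^2<\infty$. The only cosmetic difference is your use of Jordan's inequality in place of the paper's bound $1-\cos x\geq x^2/8$.
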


\begin{proof}
Recall that
\begin{equation}\label{eqcschgamma}
\csch(\gamma_{\theta})=\frac{1}{\sqrt{[\coth(2\beta)\cosh(2\beta)+1-\cos\theta][\coth(2\beta)\cosh(2\beta)-1-\cos\theta]}}.
\end{equation}
By \eqref{eqinf} in Lemma \ref{lemest},
\begin{equation}\label{eqcschineq}
\csch(\gamma_{\theta})\leq\frac{1}{\sqrt{(3-\cos\theta)(1-\cos\theta)}}\leq \frac{1}{\sqrt{2(1-\cos\theta)}}\leq\frac{2}{\theta},
\end{equation}
where the last inequality follows since
\begin{equation}\label{eqcosineq}
1-\cos x\geq x^2/8 \text{ for each } x\in[0,\pi].
\end{equation}
Therefore, by Lemma \ref{lemharmonic} and $\sum_{n=1}^{\infty}1/(2n-1)^2<\infty$, for all large $N$,
\begin{align*}
\sum_{\theta}\csch(\gamma_{\theta})\leq \sum_{\theta}\frac{2}{\theta}=\frac{4N}{\pi}\sum_{n=1}^N\frac{1}{2n-1}\leq C_3N\ln N,\\
\sum_{\theta}\csch^2(\gamma_{\theta})\leq \sum_{\theta}\frac{4}{\theta^2}=\frac{16N^2}{\pi^2}\sum_{n=1}^N\frac{1}{(2n-1)^2}\leq C_4 N^2.
\end{align*}
\end{proof}

From \eqref{eqgamma}, we can compute (all derivatives are respect to $\beta$)
\begin{equation}\label{eqgamma'}
\gamma_{\theta}^{\prime}=2\cosh(2\beta)[1-\csch^2(2\beta)]\csch(\gamma_{\theta}),
\end{equation}
\begin{align}\label{eqgamma''}
\gamma_{\theta}^{\prime\prime}=&4\sinh(2\beta)[1-\csch^2(2\beta)]\csch(\gamma_{\theta})+8\cosh^2(2\beta)\csch^3(2\beta)\csch(\gamma_{\theta})\nonumber\\
&-4\cosh^2(2\beta)[1-\csch^2(2\beta)]^2\cosh(\gamma_{\theta})\csch^3(\gamma_{\theta}),
\end{align}
\begin{align}\label{eqgamma'''}
\gamma_{\theta}^{\prime\prime\prime}=&\Big\{8\cosh(2\beta)[1-\csch^2(2\beta)]\csch(\gamma_{\theta})-8\cosh^3(2\beta)[1-\csch^2(2\beta)]^3\csch^3(\gamma_{\theta})\Big\}\nonumber\\
&+\Big\{16\csch(2\beta)\coth(2\beta)-24\cosh(2\beta)\sinh(2\beta)\cosh(\gamma_{\theta})[1-\csch^2(2\beta)]^2\csch^2(\gamma_{\theta})\nonumber\\
&\qquad+32\cosh(2\beta)\csch^2(2\beta)-48\cosh^3(2\beta)\csch^4(2\beta)\Big\}\csch(\gamma_{\theta})\nonumber\\
&+\Big\{-48\cosh^3(2\beta)\csch^3(2\beta)\cosh(\gamma_{\theta})[1-\csch^2(2\beta)]\csch(\gamma_{\theta})\nonumber\\
&\qquad+24\cosh^3(2\beta)\cosh^2(\gamma_{\theta})[1-\csch^2(2\beta)]^3\csch^3(\gamma_{\theta})\Big\}\csch^2(\gamma_{\theta}).
\end{align}

By Lemma~\ref{lemvalues}, we have
\begin{equation}\label{eqgammader}
\gamma_{\theta}^{\prime}|_{\beta=\beta_c}=0, \gamma_{\theta}^{\prime\prime}|_{\beta=\beta_c}=\frac{16}{\sqrt{3-4\cos\theta+\cos^2\theta}}.
\end{equation}

We are ready to prove \eqref{eqL3e} in Proposition \ref{prop}.
\begin{proof}[Proof of \eqref{eqL3e} in Proposition \ref{prop}]
The Taylor expansion of $L_3$ (see \eqref{eqL3L4}) around $\beta=\beta_c$ implies that there exists $\tilde{\beta}\in\left(\beta_c-t/\sqrt{4MN\ln N},\beta_c\right)$ such that
\begin{align}\label{eqL3Taylor}
&\quad L_3\left(\beta_c-t/\sqrt{4MN\ln N}\right)-L_3(\beta_c)\nonumber\\
&=\frac{-t}{\sqrt{4MN\ln N}}L_3^{\prime}(\beta_c)+\frac{t^2}{8MN\ln N}L^{\prime\prime}(\beta_c)-\frac{t^3}{6(4MN\ln N)^{3/2}}L^{\prime\prime\prime}(\tilde{\beta})\nonumber\\
&=\frac{t^2}{4N\ln N}\sum_{\theta}\frac{16}{\sqrt{3-4\cos\theta+\cos^2\theta}}-\frac{t^3M}{3(4MN\ln N)^{3/2}}\sum_{\theta}\gamma_{\theta}^{\prime\prime\prime}|_{\beta=\tilde{\beta}},
\end{align}
where we have used \eqref{eqgammader} in the last equality.

Note that $1/\sqrt{3-4\cos\theta-\cos^2\theta}-1/\theta$ is a continuous function of $\theta$ on $[0,\pi]$ if we define its value at $\theta=0$ being $0$ since
\[\lim_{\theta\downarrow 0}\left[\frac{1}{\sqrt{3-4\cos\theta+\cos^2\theta}}-\frac{1}{\theta}\right]=0.\]
Therefore, there exists a constant $C_5\in(0,\infty)$ such that for all $N\in\mathbb{N}$,
\begin{equation}\label{eqdeff}
\sum_{\theta}\left|\frac{1}{\sqrt{3-4\cos\theta+\cos^2\theta}}-\frac{1}{\theta}\right|\leq C_5 N.
\end{equation}
Now we have
\begin{align}\label{eqL3Taylor1}
&\quad\lim_{N\rightarrow \infty}\frac{t^2}{4N\ln N}\sum_{\theta}\frac{16}{\sqrt{3-4\cos\theta+\cos^2\theta}}\nonumber\\
&=\lim_{N\rightarrow \infty}\frac{4t^2}{N\ln N}\left\{\sum_{\theta}\left[\frac{1}{\sqrt{3-4\cos\theta+\cos^2\theta}}-\frac{1}{\theta}\right]+\sum_{\theta}\frac{1}{\theta}\right\}\nonumber\\
&=\lim_{N\rightarrow \infty}\frac{4t^2}{N\ln N}\frac{2N}{\pi}\sum_{n=1}^N\frac{1}{2n-1}\nonumber\\
&=\frac{4t^2}{\pi},
\end{align}
where we have used \eqref{eqdeff} and $\theta=(2n-1)\pi/(2N)$ in the second equality and Lemma \ref{lemharmonic} in the last equality.

Next, we prove that the remainder in \eqref{eqL3Taylor} vanishes as $N\rightarrow\infty$. By \eqref{eqgamma'''}, Lemmas~\ref{lemvalues} and \ref{lemest}, there exist constants $C_6, C_7, C_8\in(0,\infty)$ such that for all large $N$, each $\tilde{\beta}\in\left(\beta_c-t/\sqrt{4MN\ln N},\beta_c\right)$ and each $\theta\in(0,\pi]$,
\begin{align*}
\left|\gamma_{\theta}^{\prime\prime\prime}|_{\beta=\tilde{\beta}}\right|&\leq C_6+C_7\csch(\gamma_{\theta})|_{\beta=\tilde{\beta}}+C_8\csch^2(\gamma_{\theta})|_{\beta=\tilde{\beta}}.
\end{align*}

Therefore, by Lemma \ref{lemsumcsch}, we have for all large $N$,
\begin{align*}
\left|\sum_{\theta}\gamma_{\theta}^{\prime\prime\prime}|_{\beta=\tilde{\beta}}\right|\leq \sum_{\theta}\left|\gamma_{\theta}^{\prime\prime\prime}|_{\beta=\tilde{\beta}}\right|\leq C_6N+C_{7}C_3N\ln N+C_{8}C_4N^2.
\end{align*}
This and \eqref{eqM} imply
\begin{equation}\label{eqL3rem}
\lim_{N\rightarrow\infty}\frac{t^3M}{3(4MN\ln N)^{3/2}}\left|\sum_{\theta}\gamma_{\theta}^{\prime\prime\prime}|_{\beta=\tilde{\beta}}\right|\leq \lim_{N\rightarrow\infty}\frac{t^3M[C_6N+C_{7}C_3N\ln N+C_{8}C_4N^2]}{3(4MN\ln N)^{3/2}}=0.
\end{equation}
Combining \eqref{eqL3Taylor}, \eqref{eqL3Taylor1} and \eqref{eqL3rem}, we finish the proof of \eqref{eqL3e}.
\end{proof}

The last and more difficult function we need to deal with is $L_4(\beta)=\sum_{\theta}f_{\theta}$. We first compute the derivatives of $f_{\theta}$ (with respect to $\beta$). By \eqref{eqf}, \eqref{eqgamma}, \eqref{eqg} and \eqref{eqgamma'}, we have
\begin{align}
f_{\theta}^{\prime}=&\frac{4M\gamma_{\theta}^{\prime}e^{-4M\gamma_{\theta}}(g_{\theta}-1)+(1-e^{-4M\gamma_{\theta}})g_{\theta}^{\prime}}
{1+e^{-4M\gamma_{\theta}}+(1-e^{-4M\gamma_{\theta}})g_{\theta}},\label{eqf'}\\
f_{\theta}^{\prime\prime}=&\frac{4M\gamma_{\theta}^{\prime\prime}e^{-4M\gamma_{\theta}}(g_{\theta}-1)-
16M^2(\gamma_{\theta}^{\prime})^2e^{-4M\gamma_{\theta}}(g_{\theta}-1)+8M\gamma_{\theta}^{\prime}e^{-4M\gamma_{\theta}}g_{\theta}^{\prime}+(1-e^{-4M\gamma_{\theta}})g_{\theta}^{\prime\prime}}
{1+e^{-4M\gamma_{\theta}}+(1-e^{-4M\gamma_{\theta}})g_{\theta}}\nonumber\\
&-\left[\frac{4M\gamma_{\theta}^{\prime}e^{-4M\gamma_{\theta}}(g_{\theta}-1)+(1-e^{-4M\gamma_{\theta}})g_{\theta}^{\prime}}{1+e^{-4M\gamma_{\theta}}+
(1-e^{-4M\gamma_{\theta}})g_{\theta}}\right]^2,\label{eqf''}
\end{align}
where
\begin{align}
g_{\theta}^{\prime}=&-2\left[\csch^2(2\beta)+\sinh(2\beta)\cos\theta\right]\csch(\gamma_{\theta})\nonumber\\
&-2\cosh^2(2\beta)\left[\coth(2\beta)\cosh(2\beta)-\cos\theta\right]\left[1-\csch^2(2\beta)\right]
\left[\csch(2\beta)-\cos\theta\right]\csch^3(\gamma_{\theta})\label{eqg'}\\
g_{\theta}^{\prime\prime}=&\Big\{-4\cosh^3(2\beta)\left[1-\csch^2(2\beta)\right]^2
\left[\csch(2\beta)-\cos\theta\right]\csch^3(\gamma_{\theta})\Big\}\nonumber\\
&+\Big\{8\csch^2(2\beta)\coth(2\beta)-4\cosh(2\beta)\cos\theta-8\cosh(2\beta)\sinh(2\beta)\times\nonumber\\
&\quad\left[\coth(2\beta)\cosh(2\beta)-\cos\theta\right]\left[1-\csch^2(2\beta)\right]\left[\csch(2\beta)-\cos\theta\right]\csch^2(\gamma_{\theta})\Big\}\csch(\gamma_{\theta}\nonumber)\\
&+\Big\{4\cosh(2\beta)\left[\csch^2(2\beta)+\sinh(2\beta)\cos\theta\right]\cosh(\gamma_{\theta})\left[1-\csch^2(2\beta)\right]\csch(\gamma_{\theta})\nonumber\\
&\quad-8\cosh^3(2\beta)\csch^3(2\beta)\left[\coth(2\beta)\cosh(2\beta)-\cos\theta\right]\left[\csch(2\beta)-\cos\theta\right]\csch(\gamma_{\theta})\nonumber\\
&\quad+4\cosh^3(2\beta)\csch^2(2\beta)\left[\coth(2\beta)\cosh(2\beta)-\cos\theta\right]\left[1-\csch^2(2\beta)\right]\csch(\gamma_{\theta})\nonumber\\
&\quad+12\cosh^3(2\beta)\left[\coth(2\beta)\cosh(2\beta)-\cos\theta\right]\cosh(\gamma_{\theta})\left[1-\csch^2(2\beta)\right]^2\times\nonumber\\
&\quad\left[\csch(2\beta)-\cos\theta\right]\csch^3(\gamma_{\theta})\Big\}\csch^2(\gamma_{\theta}).\label{eqg''}
\end{align}
By Lemma \ref{lemvalues}, \eqref{eqg}, \eqref{eqgammader}, \eqref{eqf'} and \eqref{eqg'}, we have
\begin{equation}\label{eqf'value}
f_{\theta}^{\prime}|_{\beta=\beta_c}=\frac{-2(1-\eta_{\theta}^{-4M})(1+\cos\theta)(3-\cos\theta)^{-1/2}(1-\cos\theta)^{-1/2}}{1+\eta_{\theta}^{-4M}+(1-\eta_{\theta}^{-4M})\sqrt{2}(1-\cos\theta)^{1/2}(3-\cos\theta)^{-1/2}},
\end{equation}
where
\begin{equation}\label{eqeta}
\eta_{\theta}:=e^{\gamma_{\theta}}|_{\beta=\beta_c}=\left(\frac{\sqrt{3-\cos\theta}+\sqrt{1-\cos\theta}}{\sqrt{2}}\right)^2\geq 1.
\end{equation}

We need the following lemma to analyze the Taylor expansion of $L^4(\beta)$ around $\beta_c$. Let us emphasize again that $M$ is a function of $N$ satisfying \eqref{eqM}.
\begin{lemma}\label{lemsumgamma}
There exist constants $C_{9}, C_{10}\in(0,\infty)$ such that for each $\beta>0$ and each large $N$,
\begin{align}
&\sum_{\theta}\left(e^{-4M\gamma_{\theta}}\theta^{-1}\right)\leq C_{9}N\ln\ln N,\label{eqsumgamma1}\\
&\sum_{\theta}\left[e^{-4M\gamma_{\theta}}\csch(\gamma_{\theta})\right]\leq C_{10}N\ln\ln N.\label{eqsumgamma2}
\end{align}
\end{lemma}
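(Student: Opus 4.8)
The plan is to reduce both inequalities to a single exponentially-weighted harmonic sum and then to control that sum using the hypothesis \eqref{eqM}. First I would dispose of \eqref{eqsumgamma2}, which follows from \eqref{eqsumgamma1}: by \eqref{eqcschineq} we have $\csch(\gamma_{\theta})\leq 2/\theta$ for every $\beta>0$, so $\sum_{\theta}e^{-4M\gamma_{\theta}}\csch(\gamma_{\theta})\leq 2\sum_{\theta}e^{-4M\gamma_{\theta}}\theta^{-1}$, and one may take $C_{10}=2C_{9}$. Thus the whole lemma rests on \eqref{eqsumgamma1}.

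Next I would reduce to $\beta=\beta_c$. Since $e^{-4M\gamma_{\theta}}$ and $\theta^{-1}$ are nonnegative and only $\gamma_{\theta}$ depends on $\beta$, and since \eqref{eqinf} gives $\cosh(\gamma_{\theta})=\coth(2\beta)\cosh(2\beta)-\cos\theta\geq 2-\cos\theta=\cosh(\gamma_{\theta})|_{\beta=\beta_c}$, monotonicity of $\cosh$ yields $\gamma_{\theta}\geq\gamma_{\theta}|_{\beta=\beta_c}$ and hence $e^{-4M\gamma_{\theta}}\leq e^{-4M\gamma_{\theta}|_{\beta_c}}$ for all $\beta>0$; so it suffices to prove the bound at $\beta=\beta_c$, whereupon it holds uniformly over all $\beta>0$. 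At criticality Lemma~\ref{lemvalues} gives $\cosh(\gamma_{\theta})|_{\beta_c}=2-\cos\theta$, equivalently $\sinh(\gamma_{\theta}/2)|_{\beta_c}=\sin(\theta/2)$; combining Jordan's inequality $\sin(\theta/2)\geq\theta/\pi$ with $\operatorname{arcsinh}(x)\geq\ln(1+\sqrt{2})\,x$ on $[0,1]$ produces a clean lower bound $\gamma_{\theta}|_{\beta_c}\geq c_0\theta$ with $c_0=2\ln(1+\sqrt{2})/\pi$. Writing $\theta=\pi(2n-1)/(2N)$, so that $\theta^{-1}=2N/(\pi(2n-1))$, and setting $b:=2c_0\pi M/N$, I am left to show $\sum_{n=1}^{N}e^{-b(2n-1)}/(2n-1)\leq C\ln\ln N$, after which \eqref{eqsumgamma1} follows with $C_{9}=(2/\pi)C$.

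The core estimate is this last sum, and here I would split at $n=\lfloor\ln N\rfloor$. For the head $1\leq n\leq\lfloor\ln N\rfloor$ I bound $e^{-b(2n-1)}\leq 1$ and apply the first estimate of Lemma~\ref{lemharmonic} with $N$ replaced by $\lfloor\ln N\rfloor$, giving $\sum_{n=1}^{\lfloor\ln N\rfloor}(2n-1)^{-1}\leq\tfrac12\ln\ln N+C_{1}$; this is the source of the $\ln\ln N$. For the tail $\lfloor\ln N\rfloor<n\leq N$ I factor out $e^{-b(2n-1)}\leq e^{-b(2\lfloor\ln N\rfloor+1)}$ and use the second estimate of Lemma~\ref{lemharmonic} to bound the remaining $\sum_{n=\lfloor\ln N\rfloor+1}^{N}(2n-1)^{-1}\leq\tfrac12\ln N+C_{2}\ln\ln N$. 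The point is that $b(2\lfloor\ln N\rfloor+1)\gtrsim 2b\ln N=4c_0\pi(M\ln N/N)$, and \eqref{eqM} forces $M\ln N/N\geq(\ln\ln N)^2$ eventually, so $e^{-b(2\lfloor\ln N\rfloor+1)}\leq e^{O(1)-4c_0\pi(\ln\ln N)^2}$; multiplying by the $O(\ln N)$ tail sum gives a quantity tending to $0$ (treating the cases $b\geq 1$ and $b<1$ separately to absorb the $O(1)$). Combining head and tail yields $\sum_{n=1}^{N}e^{-b(2n-1)}/(2n-1)\leq\tfrac12\ln\ln N+O(1)$, which is the desired bound.

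The main obstacle is getting the order $\ln\ln N$ rather than the naive $\ln N$: without the exponential weight the sum $\sum_{\theta}\theta^{-1}$ is of order $N\ln N$ (as in Lemma~\ref{lemsumcsch}), so the entire gain must come from $e^{-4M\gamma_{\theta}}$. The delicacy is that $M$ is permitted to be as small as $\sim N(\ln\ln N)^2/\ln N$, so $b$ can tend to $0$ and the exponential is ineffective for small $\theta$; the precise balance—quantified exactly by \eqref{eqM}—is what makes the split at $\lfloor\ln N\rfloor$ succeed and simultaneously explains why both estimates of Lemma~\ref{lemharmonic} are needed.
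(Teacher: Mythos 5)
Your proposal is correct and follows essentially the same route as the paper's proof: reduce to $\beta=\beta_c$ using \eqref{eqinf}, split the sum at $n=\lfloor\ln N\rfloor$, bound the head via Lemma~\ref{lemharmonic} (the source of the $\ln\ln N$), kill the tail with the exponential factor using \eqref{eqM}, and deduce \eqref{eqsumgamma2} from \eqref{eqsumgamma1} via \eqref{eqcschineq}. The only cosmetic difference is that you first linearize $\gamma_{\theta}|_{\beta=\beta_c}\geq c_0\theta$ (Jordan's inequality plus concavity of $\operatorname{arcsinh}$) before estimating, whereas the paper works directly with the explicit product form of $e^{-4M\gamma_{\theta}}$ together with $1-\cos x\geq x^2/8$; the quantitative content is identical.
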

\begin{proof}
By \eqref{eqgamma}, and \eqref{eqinf} in Lemma \ref{lemest}, we have
\begin{align}\label{eqsumint1}
&\sum_{\theta}\left(e^{-4M\gamma_{\theta}}\theta^{-1}\right)\nonumber\\
=&\sum_{\theta}\left[\left(\frac{\sqrt{\coth(2\beta)\cosh(2\beta)+1-\cos\theta}+\sqrt{\coth(2\beta)\cosh(2\beta)-1-\cos\theta}}{\sqrt{2}}\right)^{-8M}\theta^{-1}\right]\nonumber\\
\leq&\sum_{\theta}\left[\left(\frac{\sqrt{3-\cos\theta}+\sqrt{1-\cos\theta}}{\sqrt{2}}\right)^{-8M}\theta^{-1}\right]\nonumber\\
\leq&\frac{2N}{\pi}\sum_{n=1}^{\lfloor\ln N\rfloor}\frac{1}{2n-1}\nonumber\\
&\quad+\frac{2N}{\pi}\sum_{n=\lfloor\ln N\rfloor+1}^N\left[\left(\frac{\sqrt{3-\cos\left(\frac{(2n-1)\pi}{2N}\right)}+\sqrt{1-\cos\left(\frac{(2n-1)\pi}{2N}\right)}}{\sqrt{2}}\right)^{-8M}\frac{1}{2n-1}\right],
\end{align}
where the last inequality follows since $(\sqrt{3-\cos\theta}+\sqrt{1-\cos\theta})/\sqrt{2}\geq 1$ for any $\theta\in[0,\pi]$. It is easy to see that (see, e.g., the proof of Lemma \ref{lemharmonic})
\begin{equation}\label{eqsumint2}
\frac{2N}{\pi}\sum_{n=1}^{\lfloor\ln N\rfloor}\frac{1}{2n-1}\leq N\ln\ln N \text{ for all large }N.
\end{equation}
For the other sum in the RHS of \eqref{eqsumint1}, we have (using \eqref{eqcosineq} in the second inequality)
\begin{align}\label{eqsumint3}
&\sum_{n=\lfloor\ln N\rfloor+1}^N\left[\left(\frac{\sqrt{3-\cos\left(\frac{(2n-1)\pi}{2N}\right)}+\sqrt{1-\cos\left(\frac{(2n-1)\pi}{2N}\right)}}{\sqrt{2}}\right)^{-8M}\frac{1}{2n-1}\right]\nonumber\\
\leq&\sum_{n=\lfloor\ln N\rfloor+1}^N\left[\left(\frac{\sqrt{2}+\sqrt{1-\cos\left(\pi\lfloor\ln N\rfloor/N\right)}}{\sqrt{2}}\right)^{-8M}\frac{1}{2n-1}\right]\nonumber\\
\leq&\sum_{n=\lfloor\ln N\rfloor+1}^N\left[\left(\frac{\sqrt{2}+\sqrt{\left(\pi\lfloor\ln N\rfloor/N\right)^2/8}}{\sqrt{2}}\right)^{-8M}\frac{1}{2n-1}\right]\nonumber \\%bad boxes
=&\left(1+\frac{\pi\lfloor\ln N\rfloor}{4N}\right)^{-8M}\sum_{n=\lfloor\ln N\rfloor+1}^N\frac{1}{2n-1}\nonumber\\
\leq&e^{-\pi M\lfloor\ln N\rfloor/N}\ln N \text{ for all large }N,
\end{align}
where the the last inequality follows from Lemma \ref{lemharmonic} and
\[\left(1+\frac{\pi\lfloor\ln N\rfloor}{4N}\right)^{4N/(\pi\lfloor\ln N\rfloor)}\geq e^{1/2} \text{ for all large }N.\]
From \eqref{eqM}, we have
\begin{equation}\label{eqsumint4}
e^{-\pi M\lfloor\ln N\rfloor/N}\leq e^{-\ln\ln N}=(\ln N)^{-1} \text{ for all large }N.
\end{equation}

Combining \eqref{eqsumint1}-\eqref{eqsumint4}, we get \eqref{eqsumgamma1}. The inequality \eqref{eqsumgamma2} follows from \eqref{eqcschineq} and \eqref{eqsumgamma1}.
\end{proof}

The Taylor expansion of $L_4$ (see \eqref{eqL3L4}) around $\beta=\beta_c$ gives
\begin{align}\label{eqL4Taylor}
\quad L_4\left(\beta_c-t/\sqrt{4MN\ln N}\right)-L_4(\beta_c)=\frac{-t}{\sqrt{4MN\ln N}}\sum_{\theta}f_{\theta}^{\prime}|_{\beta=\beta_c}+\frac{t^2}{8MN\ln N}\sum_{\theta}f_{\theta}^{\prime\prime}|_{\beta=\tilde{\beta}},
\end{align}
where $\tilde{\beta}\in\left(\beta_c-t/\sqrt{4MN\ln N},\beta_c\right)$. The following lemma is about the asymptotic behavior of the first term on the RHS of \eqref{eqL4Taylor}.

\begin{lemma}\label{lemL4first}
\begin{equation}
\lim_{N\rightarrow\infty}\frac{-t}{\sqrt{4MN\ln N}}\left[\sum_{\theta}f_{\theta}^{\prime}|_{\beta=\beta_c}+\frac{4}{\pi}N\ln N\right]=0.
\end{equation}
\end{lemma}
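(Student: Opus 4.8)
The plan is to prove that $\sum_\theta f_\theta'|_{\beta=\beta_c}$ equals $-\frac4\pi N\ln N$ up to an error of order $N\ln\ln N$; once divided by $\sqrt{4MN\ln N}$ this error vanishes, and it does so precisely because of hypothesis \eqref{eqM}. The guiding heuristic is that the factor $\eta_\theta^{-4M}$ in \eqref{eqf'value} is negligible away from $\theta=0$, so that $f_\theta'|_{\beta=\beta_c}\approx -4/\theta$ there, and summing $-4/\theta$ over $\theta=(2n-1)\pi/(2N)$ reproduces $-\frac4\pi N\ln N$ through Lemma~\ref{lemharmonic}. I would therefore write
\begin{equation*}
\sum_\theta f_\theta'|_{\beta=\beta_c}+\frac4\pi N\ln N=\left[\frac4\pi N\ln N-\sum_\theta\frac4\theta\right]+\sum_\theta\left(f_\theta'|_{\beta=\beta_c}+\frac4\theta\right)
\end{equation*}
and estimate the two pieces separately. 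The bracket is immediate: since $\sum_\theta\frac4\theta=\frac{8N}{\pi}\sum_{n=1}^N\frac1{2n-1}$ and Lemma~\ref{lemharmonic} gives $\sum_{n=1}^N\frac1{2n-1}=\frac{\ln N}2+O(1)$, the bracket is $O(N)$.

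To handle the second sum I would separate the effect of $\eta_\theta^{-4M}$ from the continuum error. Setting $a:=\eta_\theta^{-4M}=e^{-4M\gamma_\theta}|_{\beta=\beta_c}\in(0,1]$, $P:=\sqrt{(3-\cos\theta)(1-\cos\theta)}$ and $Q:=\sqrt2(1-\cos\theta)$, formula \eqref{eqf'value} becomes
\begin{equation*}
f_\theta'|_{\beta=\beta_c}=\frac{-2(1-a)(1+\cos\theta)}{(1+a)P+(1-a)Q},\qquad h_\theta:=\frac{-2(1+\cos\theta)}{P+Q},
\end{equation*}
where $h_\theta$ is the value at $a=0$ (equivalently, the $M\to\infty$ limit). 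A short computation yields the clean identity
\begin{equation*}
f_\theta'|_{\beta=\beta_c}-h_\theta=\frac{4a(1+\cos\theta)P}{[(1+a)P+(1-a)Q](P+Q)},
\end{equation*}
and I would split $f_\theta'|_{\beta=\beta_c}+\frac4\theta=\left(h_\theta+\frac4\theta\right)+\left(f_\theta'|_{\beta=\beta_c}-h_\theta\right)$.

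For the first summand, the expansions $1-\cos\theta=\theta^2/2+O(\theta^4)$, $P=\theta+O(\theta^3)$, $Q=O(\theta^2)$ give $h_\theta=-4/\theta+O(1)$, so $h_\theta+\frac4\theta$ extends continuously to $[0,\pi]$ and is bounded, whence $\sum_\theta|h_\theta+\frac4\theta|=O(N)$. For the correction, the inequality $(1+a)P+(1-a)Q\geq P$ (valid since $a\in(0,1]$) gives $\frac{P}{(1+a)P+(1-a)Q}\leq 1$, and together with $\frac{1+\cos\theta}{P+Q}=\frac12|h_\theta|\leq C/\theta$ this produces the bound
\begin{equation*}
\left|f_\theta'|_{\beta=\beta_c}-h_\theta\right|\leq \frac{Ca}{\theta}=C\frac{e^{-4M\gamma_\theta}|_{\beta=\beta_c}}{\theta}.
\end{equation*}
Summing over $\theta$ and invoking \eqref{eqsumgamma1} of Lemma~\ref{lemsumgamma} then yields $\sum_\theta|f_\theta'|_{\beta=\beta_c}-h_\theta|=O(N\ln\ln N)$.

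Collecting the three bounds, $\sum_\theta f_\theta'|_{\beta=\beta_c}+\frac4\pi N\ln N=O(N\ln\ln N)$, so the quantity in the lemma is $O\!\left(\frac{N\ln\ln N}{\sqrt{MN\ln N}}\right)$; its square is $\frac{N(\ln\ln N)^2}{M\ln N}$, which tends to $0$ by \eqref{eqM}, and the limit is $0$. I expect the main obstacle to be the correction term $f_\theta'|_{\beta=\beta_c}-h_\theta$: near $\theta=0$ the factor $a$ approaches $1$ while $1/\theta$ diverges, so the pointwise bound $Ce^{-4M\gamma_\theta}/\theta$ is summable only at the borderline rate $N\ln\ln N$ delivered by Lemma~\ref{lemsumgamma} (which isolates the $O(\ln N)$ small-index terms from the exponentially suppressed ones), and it is exactly this rate that \eqref{eqM} is calibrated to defeat.
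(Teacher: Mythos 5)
Your proof is correct and takes essentially the same route as the paper: the main term comes from $\sum_\theta 4/\theta$ via Lemma~\ref{lemharmonic}, the $M$-dependent corrections are bounded by $O(N\ln\ln N)$ via \eqref{eqsumgamma1} of Lemma~\ref{lemsumgamma}, the continuum errors are $O(N)$ by continuity of the relevant functions on $[0,\pi]$, and hypothesis \eqref{eqM} makes the total error vanish after division by $\sqrt{4MN\ln N}$. The only difference is organizational: your single identity comparing $f_\theta'|_{\beta=\beta_c}$ with its $a=0$ limit $h_\theta$ merges into one step what the paper does in two separate removals of $\eta_\theta^{-4M}$ (first from the numerator, then from the denominator of \eqref{eqf'value}), a modest streamlining rather than a genuinely different argument.
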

\begin{proof}
From \eqref{eqf'value}, we have (recall that $\eta_{\theta}\geq1$ from \eqref{eqeta})
\begin{align}\label{eqL4first1}
-\sum_{\theta}f_{\theta}^{\prime}|_{\beta=\beta_c}=&\sum_{\theta}\frac{2(1-\eta_{\theta}^{-4M})(1+\cos\theta)(3-\cos\theta)^{-1/2}(1-\cos\theta)^{-1/2}}
{1+\eta_{\theta}^{-4M}+(1-\eta_{\theta}^{-4M})\sqrt{2}(1-\cos\theta)^{1/2}(3-\cos\theta)^{-1/2}}\nonumber\\
=&\sum_{\theta}\frac{2(1-\eta_{\theta}^{-4M})(1+\cos\theta)(3-\cos\theta)^{-1/2}\left[(1-\cos\theta)^{-1/2}-\sqrt{2}\theta^{-1}\right]}
{1+\eta_{\theta}^{-4M}+(1-\eta_{\theta}^{-4M})\sqrt{2}(1-\cos\theta)^{1/2}(3-\cos\theta)^{-1/2}}\nonumber\\
&+\sum_{\theta}\frac{2(1-\eta_{\theta}^{-4M})(1+\cos\theta)(3-\cos\theta)^{-1/2}\sqrt{2}\theta^{-1}}
{1+\eta_{\theta}^{-4M}+(1-\eta_{\theta}^{-4M})\sqrt{2}(1-\cos\theta)^{1/2}(3-\cos\theta)^{-1/2}}.
\end{align}
Since $\lim_{\theta\downarrow0}\left[(1-\cos\theta)^{-1/2}-\sqrt{2}\theta^{-1}\right]=0$, there exists a constant $C_{11}\in(0,\infty)$ such that for each $N\in\mathbb{N}$,
\begin{equation}\label{eqL4first2}
\left|\sum_{\theta}\frac{2(1-\eta_{\theta}^{-4M})(1+\cos\theta)(3-\cos\theta)^{-1/2}\left[(1-\cos\theta)^{-1/2}-\sqrt{2}\theta^{-1}\right]}
{1+\eta_{\theta}^{-4M}+(1-\eta_{\theta}^{-4M})\sqrt{2}(1-\cos\theta)^{1/2}(3-\cos\theta)^{-1/2}}\right|\leq C_{11}N.
\end{equation}
The last sum in the RHS of \eqref{eqL4first1} contains
\begin{align}\label{eqL4first3}
&\sum_{\theta}\frac{2\eta_{\theta}^{-4M}(1+\cos\theta)(3-\cos\theta)^{-1/2}\sqrt{2}\theta^{-1}}
{1+\eta_{\theta}^{-4M}+(1-\eta_{\theta}^{-4M})\sqrt{2}(1-\cos\theta)^{1/2}(3-\cos\theta)^{-1/2}}\leq4\sum_{\theta}\left[\eta_{\theta}^{-4M}\theta^{-1}\right]\nonumber\\
=&4\sum_{\theta}\left[e^{-4M\gamma_{\theta}}|_{\beta=\beta_c}\theta^{-1}\right]\leq 4C_{9}N\ln\ln N,
\end{align}
where we have used \eqref{eqeta}, and \eqref{eqsumgamma1} from Lemma \ref{lemsumgamma} in the last equality. The remaining sum that we have not analyzed is
\begin{align}\label{eqL4first4}
&\sum_{\theta}\frac{2(1+\cos\theta)(3-\cos\theta)^{-1/2}\sqrt{2}\theta^{-1}}
{1+\eta_{\theta}^{-4M}+(1-\eta_{\theta}^{-4M})\sqrt{2}(1-\cos\theta)^{1/2}(3-\cos\theta)^{-1/2}}\nonumber\\
=&\sum_{\theta}\frac{2(1+\cos\theta)(3-\cos\theta)^{-1/2}\sqrt{2}\theta^{-1}\left[-\eta_{\theta}^{-4M}\left(1-\sqrt{\frac{2(1-\cos\theta)}{(3-\cos\theta)}}\right)\right]}
{\left[1+\sqrt{\frac{2(1-\cos\theta)}{(3-\cos\theta)}}\right]\left[1+\sqrt{\frac{2(1-\cos\theta)}{(3-\cos\theta)}}+\eta_{\theta}^{-4M}\left(1-\sqrt{\frac{2(1-\cos\theta)}{(3-\cos\theta)}}\right)\right]}\nonumber\\
&+\sum_{\theta}\frac{2(1+\cos\theta)(3-\cos\theta)^{-1/2}\sqrt{2}\theta^{-1}}{1+\sqrt{2}(1-\cos\theta)^{1/2}(3-\cos\theta)^{-1/2}}.
\end{align}
By applying \eqref{eqsumgamma1} from Lemma \ref{lemsumgamma} , we get (noting that $\sqrt{2(1-\cos x)/(3-\cos x)}\in [0,1]$)
\begin{align}\label{eqL4first5}
&\left|\sum_{\theta}\frac{2(1+\cos\theta)(3-\cos\theta)^{-1/2}\sqrt{2}\theta^{-1}\left[-\eta_{\theta}^{-4M}\left(1-\sqrt{\frac{2(1-\cos\theta)}{(3-\cos\theta)}}\right)\right]}
{\left[1+\sqrt{\frac{2(1-\cos\theta)}{(3-\cos\theta)}}\right]\left[1+\sqrt{\frac{2(1-\cos\theta)}{(3-\cos\theta)}}+\eta_{\theta}^{-4M}\left(1-\sqrt{\frac{2(1-\cos\theta)}{(3-\cos\theta)}}\right)\right]}\right|
\leq4\sum_{\theta}\left[\eta_{\theta}^{-4M}\theta^{-1}\right]\nonumber\\
=&4\sum_{\theta}\left[e^{-4M\gamma_{\theta}}|_{\beta=\beta_c}\theta^{-1}\right]\leq 4C_{9}N\ln\ln N.
\end{align}
The second sum on the RHS of \eqref{eqL4first4} is
\begin{align}\label{eqL4first6}
&\sum_{\theta}\frac{2(1+\cos\theta)(3-\cos\theta)^{-1/2}\sqrt{2}\theta^{-1}}{1+\sqrt{2}(1-\cos\theta)^{1/2}(3-\cos\theta)^{-1/2}}=\sum_{\theta}\frac{4}{\theta}\nonumber\\
&\qquad+\sum_{\theta}\left[\left(\frac{2(1+\cos\theta)(3-\cos\theta)^{-1/2}\sqrt{2}}{1+\sqrt{2}(1-\cos\theta)^{1/2}(3-\cos\theta)^{-1/2}}-4\right)\theta^{-1}\right].
\end{align}
Since the limit of the function in the brackets as $\theta\downarrow 0$ is $-2\sqrt{2}$, there exists a constant $C_{12}\in(0,\infty)$ such that for all $N\in\mathbb{N}$,
\begin{equation}\label{eqL4first7}
\left|\sum_{\theta}\left[\left(\frac{2(1+\cos\theta)(3-\cos\theta)^{-1/2}\sqrt{2}}{1+\sqrt{2}(1-\cos\theta)^{1/2}(3-\cos\theta)^{-1/2}}-4\right)\theta^{-1}\right]\right|
\leq C_{12}N.
\end{equation}

Combining \eqref{eqL4first1}-\eqref{eqL4first7}, we get
\[\left|-\sum_{\theta}f_{\theta}^{\prime}|_{\beta=\beta_c}-\sum_{\theta}\frac{4}{\theta}\right|\leq(C_{11}+C_{12})N+8C_{9}N\ln\ln N.\]
By using \eqref{eqM} and noting that $\theta=(2n-1)\pi/(2N)$, we have
\[\lim_{N\rightarrow\infty}\frac{-t}{\sqrt{4MN\ln N}}\left[\sum_{\theta}f_{\theta}^{\prime}|_{\beta=\beta_c}+\frac{8N}{\pi}\sum_{n=1}^N\frac{1}{2n-1}\right]=0.\]
This completes the proof of the lemma by applying Lemma \ref{lemharmonic}.
\end{proof}

Our last lemma is about the asymptotic behavior of the second term on the RHS of \eqref{eqL4Taylor}.
\begin{lemma}\label{lemL4second}
For any $\tilde{\beta}\in\left(\beta_c-t/\sqrt{4MN\ln N},\beta_c\right)$, we have
\begin{equation}
\lim_{N\rightarrow\infty}\frac{t^2}{8MN\ln N}\sum_{\theta}f_{\theta}^{\prime\prime}|_{\beta=\tilde{\beta}}=0.
\end{equation}
\end{lemma}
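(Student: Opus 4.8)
The plan is to reduce the claim to the single estimate $\sum_{\theta}\left|f_{\theta}^{\prime\prime}|_{\beta=\tilde{\beta}}\right|=o(MN\ln N)$, after which the prefactor $t^2/(8MN\ln N)$ forces the limit to vanish. As a first step I would bound the denominator in \eqref{eqf''} from below by $1$: since $\tilde\beta<\beta_c$ gives $\csch(2\tilde\beta)\geq\csch(2\beta_c)=1\geq\cos\theta$, the quantity $g_{\theta}=\cosh(2\beta)[\csch(2\beta)-\cos\theta]\csch(\gamma_{\theta})$ is nonnegative at $\beta=\tilde\beta$, and together with $e^{-4M\gamma_{\theta}}\in(0,1]$ this yields $1+e^{-4M\gamma_{\theta}}+(1-e^{-4M\gamma_{\theta}})g_{\theta}\geq 1$. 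It therefore suffices to bound the numerator of the first fraction in \eqref{eqf''} (four terms) and the square $[f_{\theta}^{\prime}]^2$, all evaluated at $\beta=\tilde\beta\in(\beta_c-t/\sqrt{4MN\ln N},\beta_c)$, so that the interval estimates \eqref{eqest1}--\eqref{eqest3} of Lemma~\ref{lemest} apply uniformly in $\theta$ and in the choice of $\tilde\beta$.

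The crucial bookkeeping is that every factor occurring in \eqref{eqgamma'}--\eqref{eqgamma''} and \eqref{eqg'}--\eqref{eqg''} is of one of three kinds: (i) a quantity bounded uniformly for $\beta$ near $\beta_c$ and $\theta\in(0,\pi]$, such as $\cosh(2\beta)$, $\sinh(2\beta)$, $\csch(2\beta)$, $\cosh(\gamma_{\theta})=\coth(2\beta)\cosh(2\beta)-\cos\theta$, and $g_{\theta}$; (ii) the small factor $1-\csch^2(2\beta)=[1-\csch(2\beta)][1+\csch(2\beta)]$, which is $O\bigl(1/\sqrt{4MN\ln N}\bigr)$ by \eqref{eqest1}; or (iii) a power of $\csch(\gamma_{\theta})$. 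Wherever a factor $[1-\csch(2\beta)]$ or $[\csch(2\beta)-\cos\theta]$ multiplies a $\csch(\gamma_{\theta})$, I would invoke \eqref{eqest2} or \eqref{eqest3} to replace that product by a bounded constant, lowering the effective power of $\csch(\gamma_{\theta})$ by one. Carried out term by term, $\left|f_{\theta}^{\prime\prime}|_{\beta=\tilde{\beta}}\right|$ is dominated by a finite sum of contributions of the two shapes
\begin{equation*}
a_N\,\csch^{k}(\gamma_{\theta})\qquad\text{and}\qquad b_N\,M^{j}\,\csch^{k}(\gamma_{\theta})\,e^{-4M\gamma_{\theta}},
\end{equation*}
where $a_N,b_N$ are nonnegative powers of $1/\sqrt{4MN\ln N}$, $j\in\{1,2\}$, and $k\leq 4$.

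To sum these contributions I would apply, for $k=1$, the bound $\sum_{\theta}\csch(\gamma_{\theta})\leq C_3 N\ln N$ of Lemma~\ref{lemsumcsch}; for $2\leq k\leq 4$, the inequality $\csch(\gamma_{\theta})\leq 2/\theta$ from \eqref{eqcschineq} with $\sum_{n\geq1}(2n-1)^{-k}<\infty$, giving $\sum_{\theta}\csch^{k}(\gamma_{\theta})\leq CN^{k}$; and, for the contributions carrying $e^{-4M\gamma_{\theta}}$ with $k=1$, the estimate $\sum_{\theta}\csch(\gamma_{\theta})e^{-4M\gamma_{\theta}}\leq C_{10}N\ln\ln N$ from \eqref{eqsumgamma2}. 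For each resulting contribution I would then verify, using \eqref{eqM} in the forms $N/(M\ln N)\to0$ and $(\ln\ln N)/(\ln N)\to0$, that $t^2/(8MN\ln N)$ times the sum tends to $0$. The representative rates are $\frac{1}{MN\ln N}\cdot N^2=\frac{N}{M\ln N}\to0$ (leading $\csch^2$ contributions, including those from $[f_{\theta}^{\prime}]^2$), $\frac{1}{MN\ln N}\cdot MN\ln\ln N=\frac{\ln\ln N}{\ln N}\to0$ (the $M\,\csch(\gamma_{\theta})e^{-4M\gamma_{\theta}}$ contribution arising from $4M\gamma_{\theta}^{\prime\prime}e^{-4M\gamma_{\theta}}(g_{\theta}-1)$), and $\bigl(N/(M\ln N)\bigr)^{3/2},\bigl(N/(M\ln N)\bigr)^{2}\to0$ (the $\csch^{3},\csch^{4}$ contributions from $g_{\theta}^{\prime\prime}$).

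The main obstacle is that two families of terms look, at first sight, too large: the term $-16M^2(\gamma_{\theta}^{\prime})^2 e^{-4M\gamma_{\theta}}(g_{\theta}-1)$ carries the large prefactor $M^2$, and $g_{\theta}^{\prime\prime}$ produces high powers $\csch^{3}(\gamma_{\theta})$ and $\csch^{4}(\gamma_{\theta})$. The resolution, and the crux of the argument, is that each such term is automatically accompanied by enough copies of the small factor $1-\csch^2(2\beta)=O(1/\sqrt{4MN\ln N})$ to compensate. In the $M^2$ term, $(\gamma_{\theta}^{\prime})^2$ contributes $[1-\csch^2(2\beta)]^2=O(1/(MN\ln N))$ by \eqref{eqgamma'}, so $16M^2(\gamma_{\theta}^{\prime})^2=O\bigl(M/(N\ln N)\bigr)\csch^2(\gamma_{\theta})$ and its normalized sum is $O(1/(\ln N)^2)$; and in $g_{\theta}^{\prime\prime}$ every power of $\csch(\gamma_{\theta})$ beyond the second is tied to a factor $1-\csch^2(2\beta)$, converting the $\csch^{3}$ and $\csch^{4}$ terms into normalized sums of size $\bigl(N/(M\ln N)\bigr)^{3/2}$ and $\bigl(N/(M\ln N)\bigr)^{2}$. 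Checking that this compensation is exact for every one of the many terms generated by \eqref{eqf''}, \eqref{eqgamma''} and \eqref{eqg''} — so that after normalization no term retains a net positive power of $M$ or an uncompensated $\csch^{k}$ with $k\geq2$ — is the delicate part, and it is exactly here that the full strength of hypothesis \eqref{eqM} is needed.
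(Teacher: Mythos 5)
Your proposal is correct and is essentially the paper's own argument: lower-bound the denominator of \eqref{eqf''} by $1$ using $g_{\theta}\geq 0$ for $\beta\leq\beta_c$ (the paper's \eqref{eqtpos}, which you in fact justify more explicitly than the paper does), reduce each numerator term to powers of $\csch(\gamma_{\theta})$ with small prefactors via Lemmas~\ref{lemvalues} and \ref{lemest}, and then sum using Lemmas~\ref{lemsumcsch} and \ref{lemsumgamma} together with \eqref{eqM}; the only cosmetic difference is that the paper uses \eqref{eqest2}--\eqref{eqest3} to cap every power at $\csch^2(\gamma_{\theta})$, arriving at the single bound \eqref{eqf''bdvalue}, whereas you also carry $\csch^3,\csch^4$ contributions compensated by factors of $1-\csch^2(2\beta)$. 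One small inaccuracy in your closing summary: it is not true that in \eqref{eqg''} every power of $\csch(\gamma_{\theta})$ beyond the second is tied to a factor $1-\csch^2(2\beta)$ --- the term with prefactor $8\cosh^3(2\beta)\csch^3(2\beta)$ has no such factor --- but this is harmless, since the mechanism you describe earlier (applying \eqref{eqest3} to the factor $[\csch(2\beta)-\cos\theta]\csch(\gamma_{\theta})$) reduces that term to $O(\csch^2(\gamma_{\theta}))$, whose normalized sum is $O(N/(M\ln N))\rightarrow 0$ by \eqref{eqM}.
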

\begin{proof}
Since $g_{\theta}\geq 0$ for each $\beta\in(0,\beta_c]$, we have for each $\beta\in(0,\beta_c]$ and $\theta\in(0,\pi]$,
\begin{equation}\label{eqtpos}
\left|1+e^{-4M\gamma_{\theta}}+(1-e^{-4M\gamma_{\theta}})g_{\theta}\right|\geq 1.
\end{equation}
Applying this to \eqref{eqf''}, we obtain for each $\beta\in(0,\beta_c]$ and $\theta\in(0,\pi]$,
\begin{align}\label{eqf''bd}
|f_{\theta}^{\prime\prime}|&\leq 4M|g_{\theta}-1||\gamma_{\theta}^{\prime\prime}|e^{-4M\gamma_{\theta}}+16M^2|g_{\theta}-1||\gamma_{\theta}^{\prime}|^2
+8M|\gamma_{\theta}^{\prime}||g_{\theta}^{\prime}|e^{-4M\gamma_{\theta}}+|g_{\theta}^{\prime\prime}|\nonumber\\
&\quad +16M^2|g_{\theta}-1|^2|\gamma_{\theta}^{\prime}|^2+|g_{\theta}^{\prime}|^2
+8M|\gamma_{\theta}^{\prime}||g_{\theta}-1||g_{\theta}^{\prime}|e^{-4M\gamma_{\theta}},
\end{align}
where $g_{\theta}^{\prime}$ and $g_{\theta}^{\prime\prime}$ are defined in \eqref{eqg'} and \eqref{eqg''}. By \eqref{eqg}, \eqref{eqgamma'}, and Lemmas~\ref{lemvalues} and \ref{lemest}, one has for all large $N$, all $\beta\in\left(\beta_c-1/\sqrt{4MN\ln N},\beta_c+1/\sqrt{4MN\ln N}\right)$ and all $\theta\in(0,\pi]$,
\begin{align}
&|g_{\theta}|=\cosh(2\beta)|[\csch(2\beta)-\cos\theta]\csch(\gamma_{\theta})|\leq6,\label{eqgbd}\\
&|\gamma_{\theta}^{\prime}|=2\cosh(2\beta)|1+\csch(2\beta)||[1-\csch(2\beta)]\csch(\gamma_{\theta})|\leq 10.\label{eqgamma'bd}
\end{align}
By \eqref{eqgamma'}, \eqref{eqgamma''}, \eqref{eqg'}, \eqref{eqg''}, and Lemmas~\ref{lemvalues} and \ref{lemest}, there exist constants $C_{13},\dots, C_{18}\in(0,\infty)$ such that for all large $N$, all $\beta\in\left(\beta_c-1/\sqrt{4MN\ln N},\beta_c+1/\sqrt{4MN\ln N}\right)$ and all $\theta\in(0,\pi]$,
\begin{align}
&|\gamma_{\theta}^{\prime}|\leq \frac{C_{13}}{\sqrt{4MN\ln N}}\csch(\gamma_{\theta}),& & |\gamma_{\theta}^{\prime\prime}|\leq C_{14}\csch(\gamma_{\theta}),\label{eqgamma''bd}\\
&|g_{\theta}^{\prime}|\leq C_{15}\csch(\gamma_{\theta}),& & |g_{\theta}^{\prime\prime}|\leq C_{16}+C_{17}\csch(\gamma_{\theta})+C_{18}\csch^2(\gamma_{\theta}).\label{eqg''bd}
\end{align}

Combining \eqref{eqf''bd}-\eqref{eqg''bd}, we get that there exist constants $C_{19}, C_{20}, C_{21}\in(0,\infty)$ such that
\begin{align}\label{eqf''bdvalue}
|f_{\theta}^{\prime\prime}|&\leq C_{16}+C_{17}\csch(\gamma_{\theta})+C_{19}\csch^2(\gamma_{\theta})+
C_{20}M\csch(\gamma_{\theta})e^{-4M\gamma_{\theta}}+C_{21}M^2\frac{\csch^2(\gamma_{\theta})}{MN\ln N}
\end{align}
for all large $N$, all $\beta\in\left(\beta_c-1/\sqrt{4MN\ln N},\beta_c\right)$ and all $\theta\in(0,\pi]$. This, \eqref{eqM}, and Lemmas~\ref{lemsumcsch} and \ref{lemsumgamma} complete the proof of the lemma.
\end{proof}

\begin{remark}
The only place where we actually use $t\geq 0$ in the proof of Proposition \ref{prop} (and thus Theorem~\ref{thm}) is \eqref{eqtpos}. It seems possible to generalize this proof to $t\in\mathbb{R}$ by a more careful analysis of \eqref{eqf''} or using $f^{\prime\prime\prime}_{\theta}$.
\end{remark}

We have all the ingredients to prove \eqref{eqL4e} in Proposition \ref{prop}.
\begin{proof}[Proof of \eqref{eqL4e} in Proposition \ref{prop}]
This follows from \eqref{eqL4Taylor}, and Lemmas \ref{lemL4first} and \ref{lemL4second}.
\end{proof}

\section*{Acknowledgements}
This research was partially supported by STCSM grant 17YF1413300. The author thanks Chuck Newman for many useful discussions related to this work.

\end{document}